\documentclass[11pt,reqno,a4paper]{amsart}

\usepackage{amsfonts}
\usepackage{epsfig}
\usepackage{color}
\usepackage{graphicx}
\usepackage{amsmath}
\usepackage{amssymb}
\DeclareMathAlphabet{\mathpzc}{OT1}{pzc}{m}{it}
\newcommand{\Rr}{\mathbb{R}}

\newcommand{\EE}{\mathcal{E}}


\setlength{\textwidth}{14cm} \setlength{\textheight}{22cm}

\newcounter{main}

\numberwithin{equation}{section}

\newtheorem{theorem}{Theorem}[section]
\newtheorem{proposition}[theorem]{Proposition}

\newtheorem{corollary}[theorem]{Corollary}

\newtheorem{remark}{Remark}[section]

\newtheorem{maintheorem}{Theorem}

\newcommand{\blanksquare}{\,\,\,$\sqcup\!\!\!\!\sqcap$}

\newcounter{example}
{{\stepcounter{example}}{\flushleft {\bf Example \arabic{example}:}}}%
{\par}

\title[On the entropy of conservative flows]
{On the entropy of conservative flows}


\author{M\'ario Bessa and Paulo Varandas}
\address{M\'ario Bessa, Centro de Matem\'atica da
  Universidade do Porto, Rua do Campo Alegre 687, 4169-007
  Porto, Portugal \\ ESTGOH-Instituto Polit\'ecnico de Coimbra, Rua General Santos Costa, 3400-124 Oliveira do Hospital, Portugal}
\email{bessa@fc.up.pt}
\address{Paulo Varandas, Departamento de Matem\'atica, Universidade Federal da Bahia\\
Av. Ademar de Barros s/n, 40170-110 Salvador, Brazil.}
\email{paulo.varandas@ufba.br}

\begin{document}

\begin{abstract}
We obtain a $C^1$-generic subset of the incompressible flows in a closed three-di\-men\-sional manifold  where Pesin's entropy formula holds thus establishing the con\-ti\-nu\-ous-time version of  \cite{T}. Moreover, in any compact manifold of dimension larger or equal to three we obtain that the \emph{metric entropy function} and the \emph{integrated upper Lyapunov exponent  function} are not continuous with respect to the $C^1$ Whitney topology. Finally, we establish the $C^2$-genericity of Pesin's entropy formula in the context of Hamiltonian four-dimensional flows.
\end{abstract}
\maketitle

\noindent\emph{MSC 2000:} primary 37D30, 37A35; secondary 37C20, 34D08.\\
\emph{Keywords:} Divergence-free vector fields, Hamiltonians, Lyapunov exponents, metric entropy.\\

\begin{section}{Introduction: basic definitions and some results}

\begin{subsection}{Notation and basic definitions}
We consider a three-di\-men\-sio\-nal closed and connected $C^\infty$ Riemannian manifold $M$ endowed with a volume-form. Let $\mu$ denote the measure associated to it that we call Lebesgue measure. We say that a vector field $X\colon M\rightarrow TM$ is \emph{divergence-free} if $\nabla\cdot X=0$ or equivalently if the measure $\mu$ is invariant for the associated flow, $X^t\colon M\rightarrow M$, $t \in \mathbb{R}$. In this case we say that the flow is \emph{incompressible} or \emph{volume-preserving}. We denote by $\mathfrak{X}_\mu^r(M)$ ($r\geq 1$) the space of $C^r$ divergence-free vector fields on $M$ and we endow this set with the usual $C^r$ Whitney topology. Denote by $dist(\cdot,\cdot)$ the distance in $M$ inherited by the Riemannian structure. 
Given $X\in\mathfrak{X}_\mu^{1}(M)$ let $Sing(X)$ denote the set of \emph{singularities} of $X$ and $\mathpzc{R}:=M\setminus Sing(X)$ the set of \emph{regular} points. 

A vector field is said to be \emph{Anosov} if the tangent bundle $TM$
splits into three continuous $DX^t$-invariant nontrivial subbundles
$E^0\oplus E^1\oplus E^2$ where $E^0$ is the flow direction, the
sub-bundle $E^2$ is uniformly contracted by $DX^{t}$ and the
sub-bundle $E^1$ is uniformly contracted by $DX^{-t}$
for all $t>0$. Of course that, for an Anosov flow, we have $Sing(X)=\emptyset$ which follows from the fact that the dimensions of the subbundles are constant on the entire manifold.
\end{subsection}

\begin{subsection}{Lyapunov exponents, Oseledets' theorem and the integrated exponent function}
Since $X^t$ is incompressible we can apply Oseledets' multiplicative ergodic theorem ~\cite{O} to the 
volume-pre\-ser\-ving diffeomorphism $f=X^1$ and obtain for $\mu$-a.e. point $x\in{M}$, a splitting
$T_{x}M=E^{1}_{x}\oplus...\oplus{E^{k(x)}_{x}}$ (\emph{Oseledets splitting}) and real numbers $\lambda_{1}(x)>...>\lambda_{k(x)}(p)$
(\emph{Lyapunov exponents}) such that $Df_{x}(E^{i}_{x})=E^{i}_{f(x)}$
and $$\underset{n\rightarrow{\pm{\infty}}}{\lim}\frac{1}{n}\log{\|Df^{n}_{x}\cdot v^{i}\|=\lambda_{i}(x)},$$
for any $v^{i}\in{E^{i}_{x}\setminus\{\vec{0}\}}$ and $i=1,...,k(x)$. Using the three-dimension and the conservativeness assumptions we observe that $k(x)=1$ or $k(x)=3$ (see the paragraph after \eqref{det} below). If $k(x)=1$ then the spectrum is trivial, that is, all the Lyapunov exponents vanish.

We denote the $\mu$-a.e. points given by this theorem by $\mathcal{O}(X^1)=\mathcal{O}(X)$. It is clear that, fixing $t\in\mathbb{R}$, any $g=X^t$ is such that $\mathcal{O}(g)=\mathcal{O}(X^1)$.

We can obtain a proof of Oseledets' theorem for the flow dynamics using its discrete version, let us see briefly how:  Since it is an asymptotic result and $DX^{r}_{x}$, for fixed $r$ and $x$ varying in a compact set, is a uniformly bounded operator we may replace the tangent map
$DX^{t}_{x}=DX^{r}_{X^{n}(x)}\circ{DX^{n}_{x}}$ by the least
integer time-$n$ map, $DX^{n}_{x}$, and compute the limit as before. 
The Oseledets splitting associated to the flow $X^t$ on any point along the orbit of $x\in\mathcal{O}(X^1)$ is the saturation, by the tangent flow $DX^t$, of the directions given by $T_{x}M=E^{1}_{x}\oplus...\oplus{E^{k(x)}_{x}}$. This theorem allows us to conclude also that
\begin{equation}\label{angle}
\lim_{t\rightarrow{\pm{\infty}}}\frac{1}{t}\log{|\det(DX^{t}_{x})|=\sum_{i=1}^{k(x)}\lambda_{i}(x)\dim(E^{i}_{x})},
\end{equation}
which is related to the sub-exponential decrease of the angles
between any subspaces of the Oseledets splitting along $\mu$-a.e.
orbit. 
Since we have the invariance $DX^{t}_{x}(X(x))=X(X^{t}(x))$, we conclude that one of Oseledets' subspaces is $E^0(x)$, and  that its associated Lyapunov exponent is zero. 

By the \emph{Liouvile formula} 
$$
\det DX^\tau_x=e^{\int_0^\tau \nabla \cdot X(X^t(x))dt},
$$
we get that whenever $\nabla\cdot X=0$ then 
\begin{equation}\label{det}
|\det(DX^{t}_{x})|=1,\,\,\, \forall t\in\mathbb{R}.  
\end{equation}

Since we are in a three-dimensional setting, then using~(\ref{angle}) and (\ref{det}), we have
$\lambda_{1}(x)+\lambda_{3}(x)=0$. Hence either
$\lambda_{1}(x)=-\lambda_{3}(x)>0$ or both Lyapunov exponents are equal to zero. In the former
case 
there exists a decomposition of the tangent space $T_x M$ as direct sum of 
two subspaces $E^{+}_{x}$ and $E^{-}_{x}$ associated to the positive Lyapunov
exponent $\lambda_{1}(x)=\lambda_{+}(x)$ and the negative one $\lambda_{3}(x)=\lambda_{-}(x)$, respectively. 

We usually reduce the study of hyperbolicity to the orbit normal bundle. Given $x\in \mathpzc{R}$ we consider its normal bundle $N_{x}=X(x)^{\perp}\subset T_{x}M$ and define the \emph{linear Poincar\'{e} flow} by $P_{X}^{t}(x):=\Pi_{X^{t}(x)}\circ DX^{t}_{x}$ where $\Pi_{X^{t}(x)}:T_{X^{t}(x)}M\rightarrow N_{X^{t}(x)}$ is the projection along the direction of $X(X^{t}(x))$. 

Due to the aforementioned property of sub-exponential decrease of the angles
between any Oseledets subspaces, it is not hard to check that if $E^\sigma_x$ ($\sigma=+/-$) is associated to the Lyapunov exponent $\lambda^\sigma(x)$, then $N^\sigma_x:=\Pi_{x}E^\sigma_x$ is an Oseledets invariant subspace of $N^{\sigma}_x$. Furthermore, the Lyapunov exponents are given by
$$
\underset{t\rightarrow{\pm{\infty}}}{\lim}\frac{1}{t}\log{\|P^{t}_{X}(x)|_{N^{\sigma}_x}}\|=\underset{t\rightarrow{\pm{\infty}}}{\lim}\frac{1}{t}\log{\|DX^t_{x}|_{E^{\sigma}_x}}\|=\lambda^{\sigma}(x).
$$

Let us consider the following integrated upper Lyapunov exponent function:
$$
\begin{array}{cccc}
\Lambda\colon & \mathfrak{X}^{1}_{\mu}(M) & \longrightarrow & \mathbb{R}\\
& X & \longmapsto & \int_M \lambda^{+}(X,x)d\mu(x).
\end{array}
$$

The next simple equality was proved in~\cite[Proposition 2.1]{B}:
\begin{equation}\label{inf}
\Lambda(X)=\underset{n\geq{1}}{\inf}\frac{1}{n}\int_{M}\log\|P^{n}_{X}(x)\|d\mu(x).
\end{equation}

Notice that the function $\Lambda$ is the infimum of continuous functions, hence upper semicontinuous. In particular, 
the continuity points of $\Lambda$ is a residual subset of $\mathfrak{X}^{1}_{\mu}(M)$  (dense $G_\delta$ in the $C^1$ topology).

The following result was proved by the first author for the case of incompressible flows on three-dimensional closed manifolds without singularities (see~\cite[Proposition 3.2]{Be}) and then generalized for the context admitting singularities in ~\cite[Proposition 2.2]{AB}. Recall that $X$ is said to be an \emph{aperiodic} vector field if the Lebesgue measure of the set of periodic points and singularities is zero. Moreover, given $\ell \in \mathbb N$
we say that the splitting $N=N^-\oplus N^+$ of the normal bundle over an invariant set $\Lambda$ is an $\ell$-\emph{dominated splitting} for the linear Poincar\'{e} flow if there exists an $\ell \in {\mathbb N}$ such that for all $x\in \Lambda$  we have:
$$
\|P_{X}^{\ell}(x)|_{N^{-}_{x}}\|  . \|P_{X}^{-\ell}(X^\ell(x))|_{N^{+}_{X^\ell(x)}}\|   \leq \frac{1}{2}.
$$
For simplicity reasons we refer to $\Lambda$ as an $\ell$-dominated invariant set.

\begin{theorem}\label{teorema1}
Let $X\in{\mathfrak{X}_{\mu}^{2}(M)}$ be an aperiodic vector field and assume that every $\ell$-dominated invariant subset has zero volume. For every given $\epsilon,\delta>0$ there exists a vector field $Y\in {\mathfrak{X}_{\mu}^{1}(M)}$ such that $Y$ is $\epsilon$-$C^1$-close to $X$ and $\Lambda(Y)<\delta$.
\end{theorem}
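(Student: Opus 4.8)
The plan is to prove this as the conservative three-dimensional flow analogue of the Bochi--Ma\~n\'e theorem: in the absence of a dominated splitting, small $C^1$ perturbations can be used to force the Oseledets directions to collapse onto one another, thereby cancelling the (mutually symmetric, since $\lambda_1+\lambda_3=0$) normal Lyapunov exponents. The whole argument takes place on the two-dimensional normal bundle $N$ through the linear Poincar\'e flow $P_X^t$; by~\eqref{det} and the discussion preceding it the two normal exponents are symmetric, $\lambda^-(x)=-\lambda^+(x)$, so that up to the natural renormalization $P_X^t$ behaves as an area-preserving ($SL(2,\R)$) cocycle over the measurable base $X^t|_{\mathpzc{R}}$. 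Since $X$ is aperiodic, the set of periodic orbits and singularities is $\mu$-negligible and may be discarded. I would then exploit the infimum formula~\eqref{inf}: it suffices to produce a $C^1$-close $Y$ and an integer $N$ with $\tfrac1N\int_M \log\|P_Y^N(x)\|\,d\mu(x)<\delta$, since by~\eqref{inf} this immediately gives $\Lambda(Y)\le \tfrac1N\int_M\log\|P_Y^N\|\,d\mu<\delta$.

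The first ingredient is a \emph{perturbation toolbox} realizing prescribed rotations of the normal bundle through genuine divergence-free perturbations. Concretely, for a regular point $p$ and a small flowbox around a long orbit segment of $p$, one constructs $Y\in\mathfrak{X}^1_\mu(M)$ supported in that flowbox, $\epsilon$-$C^1$-close to $X$, whose linear Poincar\'e flow over the segment differs from $P_X$ by a preassigned rotation $R_\theta$ of $N_p$. The key quantitative point is that the $C^1$ cost of inserting an angle $\theta$ scales like $\theta$ divided by the transit time, so that using arbitrarily long orbit segments---available precisely because regular points are neither periodic nor singular---any finite rotation can be realized at arbitrarily small $C^1$ cost. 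This construction must respect $\nabla\cdot Y=0$, which is the delicate feature distinguishing the conservative flow category from the diffeomorphism setting.

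Second, I would invoke the characterization of domination by angles: the existence of an $\ell$-dominated splitting $N=N^-\oplus N^+$ over an invariant set is equivalent to a uniform exponential estimate on the angle between the Oseledets directions along orbits. Hence the hypothesis that every $\ell$-dominated invariant set has zero volume forces, on a positive-measure subset $\Gamma$ of $\{\lambda^+>0\}$, the angle $\angle\bigl(N^+_{X^t(x)},N^-_{X^t(x)}\bigr)$ to fail to be uniformly bounded away from zero; it returns arbitrarily close to zero along the orbit. At such near-tangency instants a rotation of size comparable to that angle, realized by the toolbox, sends the expanding direction $N^+$ onto a neighborhood of the contracting direction $N^-$, so that over the ensuing orbit block the norm $\|P^{\,\cdot}_Y\|$ is contracted rather than expanded. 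This is the mechanism that drops the integrated exponent.

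The remaining work is the global bookkeeping. Using a Rokhlin (Kakutani) tower adapted to $X^t$, I would cover a definite proportion of $\Gamma$ by pairwise disjoint flowboxes, insert on each a rotation of the above type at a near-tangency time, and sum the estimates to obtain $\tfrac1N\int_M\log\|P^N_Y\|\,d\mu<\delta$ for a suitably large $N$, while the disjointness of the supports keeps $\|Y-X\|_{C^1}<\epsilon$. The $C^2$ regularity of $X$ is used to guarantee that $P_X^t$ is $C^1$ and that the Oseledets angle estimates and the dominated-splitting equivalence apply, whereas the perturbation itself is only $C^1$-close, which is why the conclusion lands in $\mathfrak{X}^1_\mu(M)$. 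I expect the main obstacle to be exactly the toolbox step: realizing the abstract $SL(2,\R)$ rotations as divergence-free $C^1$-small perturbations, with disjoint supports over a positive-measure tower and with the cost-versus-transit-time estimate, all while preserving incompressibility, is the technical heart on which the rest of the argument rests.
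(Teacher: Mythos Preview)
The paper does not actually prove Theorem~\ref{teorema1}: it is quoted as a known result, established for flows without singularities in~\cite[Proposition~3.2]{Be} and then in full generality in~\cite[Proposition~2.2]{AB}. So there is no ``paper's own proof'' to compare against; the statement functions here purely as an input to the subsequent arguments.

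That said, your outline is faithful to the strategy of those cited works. The reduction to the two-dimensional linear Poincar\'e flow, the use of the infimum formula~\eqref{inf} to reduce the problem to controlling $\tfrac{1}{N}\int_M\log\|P_Y^N\|\,d\mu$, the perturbation lemma realizing normal rotations by $C^1$-small divergence-free modifications supported in long flowboxes, the lack-of-domination $\Rightarrow$ recurrent small angles dichotomy, and the Kakutani-tower bookkeeping are exactly the ingredients assembled in~\cite{Be,AB} (adapting Bochi--Ma\~n\'e to incompressible three-flows). You also correctly flag the genuine technical core: building the volume-preserving rotation perturbations with the right cost-versus-time scaling and disjoint supports. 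One small caveat: the $C^2$ hypothesis is used not merely so that $P_X^t$ is $C^1$, but crucially so that a $C^1$-bounded-distortion (Pliss-type) argument is available when selecting good orbit segments; without it the measure estimates in the tower step would not close. Otherwise your plan matches the actual proofs in the literature that the paper is invoking.
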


As a consequence we obtain the following result:

\begin{theorem}\label{bessa}
There exists a residual subset $\mathcal{R}$ of $\mathfrak{X}_{\mu}^{1}(M)$, such that if $X\in\mathcal{R}$ is not an Anosov flow, then Lebesgue a.e. point in $M$ has zero Lyapunov exponents.
\end{theorem}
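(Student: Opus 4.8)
The plan is to take $\mathcal{R}$ to be the set of continuity points of the integrated exponent function $\Lambda$. Since $\Lambda$ is upper semicontinuous (being the infimum \eqref{inf} of continuous functions), its continuity points already form a residual subset of $\mathfrak{X}_\mu^1(M)$, and I would intersect this with the (known to be $C^1$-generic) set of aperiodic vector fields, so that the periodic-plus-singular set has zero volume and the hypotheses of Theorem \ref{teorema1} become available for the fields involved. The goal is then to establish, for $X\in\mathcal{R}$, the dichotomy: either $\Lambda(X)=0$, in which case $\lambda^{+}(X,x)=0$ for $\mu$-a.e. $x$ (recall $\lambda^{+}\ge 0$, so a vanishing integral forces $\lambda^{+}=0$ a.e., and hence, since $\lambda^{-}=-\lambda^{+}$, zero Lyapunov exponents a.e.), or else $X$ is Anosov.

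First I would treat the case in which no $\ell$-dominated invariant set has positive volume. Approximating $X$ in the $C^1$ topology by $C^2$ aperiodic fields and applying Theorem \ref{teorema1}, I obtain fields $Y$ arbitrarily $C^1$-close to $X$ with $\Lambda(Y)$ arbitrarily small. Because $X$ is a continuity point of $\Lambda$, this forces $\Lambda(X)=0$, which is the first alternative. Consequently, whenever $\Lambda(X)>0$ at a continuity point, there must exist some $\ell\in\mathbb{N}$ and an invariant set $\Delta$ of positive volume carrying an $\ell$-dominated splitting $N=N^{-}\oplus N^{+}$ for the linear Poincar\'e flow.

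The second half of the argument is to upgrade this dominated set to global uniform hyperbolicity. Here I would use that the linear Poincar\'e flow acts on the two-dimensional normal bundle and, by \eqref{det} together with the fact that the flow direction $E^0$ carries zero exponent, is asymptotically area-preserving; for such a two-dimensional cocycle an $\ell$-dominated splitting is automatically a uniformly hyperbolic splitting, since domination combined with unit Jacobian forces exponential contraction of $N^{-}$ and expansion of $N^{+}$. It then remains to see that this hyperbolic set is all of $M$: the dominated splitting extends continuously to $\overline{\Delta}$, domination is an open condition, and---using that $\mu$ is invariant with full support, so that every point is nonwandering by Poincar\'e recurrence, together with the connectedness of $M$ and genericity---the invariant hyperbolic set should be forced to coincide with $M$ with $Sing(X)=\emptyset$. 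The splitting $N^{-}\oplus E^0\oplus N^{+}$ is then an Anosov splitting of $TM$, so $X$ is Anosov.

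The main obstacle I anticipate is precisely this last upgrade: ruling out a proper hyperbolic set of positive volume coexisting with a positive-measure region of zero exponents, which would violate the dichotomy. This is where residuality and the conservative structure must be exploited most carefully---either by showing that at a continuity point such a proper dominated set can be destroyed by a further $C^1$-perturbation (contradicting continuity of $\Lambda$), or by propagating the dominated splitting over a full-measure, and then dense, invariant set via the generic recurrence of incompressible flows. A secondary technical point is the passage between regularities, since Theorem \ref{teorema1} requires $C^2$; the perturbation step must therefore be coupled with the $C^1$-density of $C^2$ divergence-free fields and the robustness of the relevant hypotheses under small approximation.
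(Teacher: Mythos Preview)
The paper does not actually prove Theorem~\ref{bessa}; it is quoted as a consequence of Theorem~\ref{teorema1}, with the full argument residing in the references \cite{Be} and \cite{AB}. Your outline reconstructs the strategy of those papers correctly: take $\mathcal{R}$ to be the continuity points of $\Lambda$; at such a point either $\Lambda(X)=0$ or there is a positive-volume $\ell$-dominated set for the linear Poincar\'e flow; in the two-dimensional unit-determinant normal bundle domination upgrades to uniform hyperbolicity; and a positive-volume hyperbolic set in a conservative $C^{1+\alpha}$ three-flow must be all of $M$.

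There is, however, a genuine slip in how you invoke Theorem~\ref{teorema1}. You impose the hypothesis ``no $\ell$-dominated invariant set of positive volume'' on $X$ and then apply Theorem~\ref{teorema1} to a $C^2$ approximation; but the theorem requires that hypothesis for the $C^2$ field itself, and it does not transfer automatically from $X$ to nearby fields. The standard repair is to run the dichotomy on the approximating sequence $X_n\in\mathfrak{X}^2_\mu(M)$ rather than on $X$: either infinitely many $X_n$ satisfy the hypotheses of Theorem~\ref{teorema1}, yielding $Y_n\to X$ with $\Lambda(Y_n)\to 0$ and hence $\Lambda(X)=0$ by continuity; or else from some point on each $X_n$ carries an $\ell_n$-dominated set of positive volume, which (by the two-dimensional determinant-one argument you give) is uniformly hyperbolic. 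For these $C^2$ fields one then uses absolute continuity of the invariant laminations to show the hyperbolic set is open, hence all of $M$ by connectedness, so $X_n$ is Anosov; since Anosov is $C^1$-open, $X$ is Anosov as well.

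You are right that the step ``positive-volume hyperbolic set $\Rightarrow$ Anosov'' is the crux and is where most of the work lies. It genuinely needs the $C^{1+\alpha}$ regularity (for absolute continuity) and, in the flow setting, an argument ruling out singularities in the closure of the dominated set; this last point is precisely the content contributed by \cite{AB} over \cite{Be}.
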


\end{subsection}

\begin{subsection}{Measure-theoretic entropy for flows, Margulis-Ruelle inequality and Pesin's formula}

Given $X\in\mathfrak{X}_{\mu}^{1}(M)$, we say that the associated flow $X^t\colon M\rightarrow M$ is \emph{expansive} if given any $\epsilon>0$, there exists $\delta>0$ such that, if $dist(X^t(x),X^{\tau(t)}(y))<\delta$ for all $t\in\mathbb{R}$, for all $x,y\in M$ and for all continuous maps $\tau\colon\mathbb{R}\rightarrow\mathbb{R}$, then $y=X^t(x)$ where $|t|<\epsilon$. 
This definition, introduced by Bowen and Walters in ~\cite{BW} roughly means that any two points whose orbits 
by the flow remain indistinguishable up to any continuous time displacement lie in the same orbit.  Moreover,
expansiveness is a topological invariant and any expansive flow admits at most countably many periodic orbits. 

\begin{remark}\label{Anosov}
The Anosov flows are expansive (see~\cite{An}).
\end{remark}

We give a brief description of the key concept of entropy, introduced in the theory of dynamical systems by Kolmogorov more than fifty years ago. In fact, topological entropy is 
one of the most important invariants in dynamics and describes the topological complexity of the system 
measuring how the dynamics separates and spread under iteration. In compact metric spaces topological entropy coincides with the limiting of the measure theoretical entropies described below, related with many other fundamental
concepts in dynamics as the Lyapunov exponents or Hausdorff dimension. We refer the reader to~\cite{Ka} for a very complete exposition on entropy.
In our volume-preserving setting, to understand the underlying dynamics it becomes relevant to study the
measure theoretical entropy with respect to the Lebesgue measure and its relation with Lyapunov exponents, which gave us the starting point for our study.  First we recall some definitions.

Given a vector field $X\in\mathfrak{X}_{\mu}^{1}(M)$ we define its measure-theoretic entropy, $h_\mu(X)$, by $h_{\mu}(X^1)$ where $X^1$ is the time-one of its associated flow. The following result is due to Abramov~\cite{A}, for a proof see ~\cite[Theorem 3 pp. 255]{CFS}.
\begin{theorem}
The metric entropy of the time-$t$ map $X^t$ is $|t|h_{\mu}(X^1)$ for any $t\in\mathbb{R}$. 
\end{theorem}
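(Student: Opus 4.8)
The plan is to show that the function $\varphi(t):=h_\mu(X^t)$ equals $|t|\,h_\mu(X^1)$ by first reducing to positive times, then pinning down $\varphi$ on the rationals via the power rule for metric entropy, and finally exploiting a regularity (monotonicity) property of $\varphi$ to pass to irrational times.

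First I would reduce to the case $t>0$. Since $X$ generates a flow, the time-$t$ maps form a one-parameter group, so $X^{-t}=(X^t)^{-1}$ and $X^{0}=\mathrm{id}$. Using the elementary facts that $h_\mu(\mathrm{id})=0$ and that $h_\mu(T^{-1})=h_\mu(T)$ for every $\mu$-preserving invertible $T$, the cases $t=0$ and $t<0$ follow at once from the statement for positive times; in particular the absolute value in the formula is produced exactly by the identity $h_\mu(X^{-t})=h_\mu(X^t)$.

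Second, I would determine $\varphi$ on the positive rationals using the power rule $h_\mu(S^n)=n\,h_\mu(S)$, valid for every $\mu$-preserving $S$ and every $n\in\mathbb{N}$. Applying it to $S=X^t$ gives $\varphi(nt)=n\,\varphi(t)$ for all $t>0$ and $n\in\mathbb{N}$. Taking $t=1$ yields $\varphi(n)=n\,\varphi(1)$; taking $t=1/q$ with $n=q$ yields $\varphi(1/q)=\tfrac1q\,\varphi(1)$; and then $\varphi(p/q)=p\,\varphi(1/q)=\tfrac pq\,\varphi(1)$. Hence $\varphi(r)=r\,h_\mu(X^1)$ for every rational $r>0$.

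The main obstacle, and the heart of the argument, is the passage from rational to arbitrary real times, where the scaling identities no longer link $\varphi(t)$ to $\varphi(1)$ and one must use the measurable structure of the flow. I would establish that $\varphi$ is monotone nondecreasing on $(0,\infty)$. The underlying mechanism is that, for a fixed partition $\xi$, the quantity $H_\mu\big(\bigvee_{i=0}^{n-1}X^{-it}\xi\big)$ measures the separation of orbits sampled at times $0,t,\dots,(n-1)t$: over a fixed time window $[0,L]$ a finer sampling (smaller time-step $s$) refines these partitions while requiring about $L/s$ steps, so that the entropy rate $\tfrac1n H_\mu(\cdot)$ scales like $s$ times the entropy accumulated over the window; this both suggests linearity and yields the comparison $\varphi(s)\le\varphi(t)$ for $0<s<t$. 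Granting monotonicity, the proof concludes quickly: for irrational $t>0$ pick rationals $r_1<t<r_2$, whence $r_1\,h_\mu(X^1)=\varphi(r_1)\le\varphi(t)\le\varphi(r_2)=r_2\,h_\mu(X^1)$, and letting $r_1\uparrow t$ and $r_2\downarrow t$ forces $\varphi(t)=t\,h_\mu(X^1)$. The technical core is therefore the monotonicity (equivalently, the linear growth of the windowed entropy), whose careful verification through the special-flow and partition machinery is carried out in~\cite[Theorem 3, pp. 255]{CFS}.
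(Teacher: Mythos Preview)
The paper does not give its own proof of this statement: it attributes the result to Abramov \cite{A} and sends the reader to \cite[Theorem~3, pp.~255]{CFS}. Your outline therefore already goes further than the paper does. The reduction to $t>0$ via $h_\mu(T^{-1})=h_\mu(T)$ and the determination of $\varphi$ on the positive rationals via the power rule $h_\mu(S^n)=n\,h_\mu(S)$ are correct and standard.

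Where your sketch is weakest is the passage to irrational times. The heuristic you offer for monotonicity is essentially circular: asserting that the entropy rate ``scales like $s$ times the entropy accumulated over the window'' is precisely the statement $\varphi(s)=s\cdot c$ that you are trying to prove, and for incommensurable $0<s<t$ there is no direct refinement relation between the joins $\bigvee_{i=0}^{n-1}X^{-is}\xi$ and $\bigvee_{j=0}^{m-1}X^{-jt}\xi$ that would yield $\varphi(s)\le\varphi(t)$ without further work. You correctly flag this as the technical core and defer it to the special-flow machinery in \cite{CFS}, which is exactly the reference the paper cites; so your proposal is consistent with, and more detailed than, the paper's own treatment, though it does not itself close the gap.
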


It is worth to point out that Sun and Vargas \cite{SV} defined a different concept of flow entropy which is well behaved when we consider a re-parametrization of the flow. 

Given a measure space $\Sigma$, a map $R\colon \Sigma\rightarrow{\Sigma}$, an $R$-invariant probability
measure $\tilde{\mu}$ defined in $\Sigma$ and a ceiling function
$h\colon \Sigma\rightarrow{\mathbb{R}^{+}}$ satisfying $h(x)\geq{\alpha}>0$
for all $x\in{\Sigma}$ and the integrability condition
 $
 \int_{\Sigma}h(x)d\tilde{\mu}(x)<\infty,
$
consider the space $M_{h}\subseteq{\Sigma\times{\mathbb{R}_+}}$ defined by
$$
M_h=\{(x,t) \in \Sigma\times{\mathbb{R}_+}: 0 \leq t \leq h(x) \}
$$
with the identification between the pairs $(x,h(x))$ and $(R(x),0)$. The semiflow defined
on $M_h$ by $S^s(x,r)=(R^{n}(x),r+s-\sum_{i=0}^{n-1}h(R^{i}(x)))$, 
where $n\in{\mathbb{N}_0}$ is uniquely defined by 
$$
\sum_{i=0}^{n-1}h(R^{i}(x))\leq{r+s}<\sum_{i=0}^{n}h(R^{i}(x))
$$ 
is called a \emph{suspension semiflow}. If $R$ is invertible then $(S^t)_t$ is indeed a flow.
Furthermore, if $\text{Leb}_1$ denotes the one dimensional Lebesgue measure it is not hard to check 
that the measure $\mu=(\tilde\mu \times \text{Leb}_1)/\int h\, d\tilde\mu$ defined on $M_h$
by
$$
\int g \, d\mu= \frac{1}{\int h\, d\tilde\mu} \int \left( \int_0^{h(x)} g(x,t) dt \right)\, d\tilde\mu(x),  \quad \forall g\in C^0(M_h)
$$
is a probability measure and it is invariant by the suspension semiflow $(S^t)_t$. In fact,
if $h$ is bounded then  
\begin{equation}\label{eq:identification}
\eta \mapsto \frac{\eta \times \text{Leb}_1}{\int h d\eta}
\end{equation}
is a  one-to-one correspondence between $R$-invariant probability measures and $S^t$-invariant probability measures (see e.g. \cite{BR}). In addition, it follows from the previous integrability condition and \emph{Abramov's formula} (AF) that
\begin{equation}\label{entropy}
h_\mu(S^t)\overset{(AF)}{=}\frac{h_{\tilde{\mu}}(R)}{\int_\Sigma h d\tilde{\mu}}.
\end{equation}
By abuse of notation, given a vector field $X\in \mathfrak{X}^1_\mu$ we let $h_\mu(X)$ denote the entropy with 
respect to the volume $\mu$ of the flow $X^t$ associated to $X$.
The following result, due to Bowen and Walters \cite[Theorem 6]{BW}, will be useful in the sequel.

\begin{theorem}\label{BW}
Let $X^t$ be a flow and $S^t$ the suspension flow representation of $X^t$ with section $\Sigma$, ceiling function $h$ and return map $R$. The flow $X^t$ is expansive if and only if $R$ is expansive. 
\end{theorem}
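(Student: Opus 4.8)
The plan is to establish the two implications separately, exploiting that in the suspension $M_h$ the flow $S^t$ moves at unit speed in the fiber direction and that, away from the identifications at the ceiling, the suspension distance between $(x,s)$ and $(y,s')$ is comparable to $dist_\Sigma(x,y)+|s-s'|$. Throughout I will use the lower bound $\alpha>0$ on the return time (recall $h\geq\alpha$), the boundedness of $h$, and the uniform continuity of $R$ and $h$ on the compact section $\Sigma$; invertibility of $R$ (so that the semiflow is in fact a flow) lets the section index $n$ range over all of $\Z$.

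For the implication ``$R$ expansive $\Rightarrow X^t$ expansive'', fix the expansivity constant $\delta_R$ of $R$ and, given $\epsilon>0$, choose $\delta>0$ small. Assume $dist(S^t(p),S^{\tau(t)}(q))<\delta$ for all $t\in\R$ and some admissible continuous reparametrization $\tau$, with $p=(x,0)\in\Sigma$ after normalising. Let $(t_n)_{n\in\Z}$ be the successive times at which $S^t(p)$ meets $\Sigma$, so $S^{t_n}(p)=(R^n(x),0)$. Since the two orbits remain $\delta$-close and the return time is bounded below by $\alpha$, for $\delta$ small the orbit of $q$ crosses $\Sigma$ near each time $\tau(t_n)$, and this crossing correspondence is bijective and order preserving; the $n$-th crossing of $q$ is a point $(R^n(y),0)$, where $y$ is the first crossing of $q$. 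The metric comparison then yields $dist_\Sigma(R^n(x),R^n(y))<\delta_R$ for every $n\in\Z$, so expansiveness of $R$ forces $x=y$. Hence $p$ and $q$ lie on a common $S^t$-orbit, and the closeness at $t=0$ bounds their time displacement by a quantity of order $\delta$, which is $<\epsilon$ once $\delta$ is small.

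For the converse, ``$X^t$ expansive $\Rightarrow R$ expansive'', fix $\epsilon\in(0,\alpha)$; with this choice two section points lying on one orbit with displacement $<\epsilon$ must coincide, as the return time never drops below $\alpha$. Let $\delta_{\mathrm{fl}}$ be the flow-expansivity constant for this $\epsilon$. Given $x,y\in\Sigma$ with $dist_\Sigma(R^n(x),R^n(y))<\delta_R$ for all $n\in\Z$, I construct a continuous increasing reparametrization $\tau$ by setting $\tau(a_n)=b_n$, where $a_n=\sum_{i<n}h(R^i(x))$ and $b_n=\sum_{i<n}h(R^i(y))$ are the respective crossing times, and interpolating affinely on each $[a_n,a_{n+1}]$. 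A direct computation shows that the fiber coordinate of $S^{\tau(t)}(q)$ differs from that of $S^t(p)$ by at most $\|h\|_\infty\,|h(R^n(x))-h(R^n(y))|/\alpha$, which is small when $\delta_R$ is small, while the base points stay the $\delta_R$-close pair $R^n(x),R^n(y)$. Thus $dist(S^t(p),S^{\tau(t)}(q))<\delta_{\mathrm{fl}}$ for all $t$, and flow expansiveness gives $q=S^t(p)$ with $|t|<\epsilon<\alpha$, whence $x=y$.

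The main obstacle is the bookkeeping surrounding the reparametrization and the metric. One must check that the affine interpolation gives a genuinely continuous and strictly monotone $\tau$, that for $\delta,\delta_R$ small relative to $\alpha$ the section crossings of the two orbits are in bijective, order-preserving correspondence, and, most delicately, that the Bowen--Walters distance on $M_h$ is uniformly comparable to the $dist_\Sigma+|\cdot|$ decomposition. The last point requires care precisely near the identification $(x,h(x))\sim(R(x),0)$, where a point at the top of one fiber must be recognised as close to a point at the bottom of the next; controlling this uniformly is exactly what the specific choice of metric on the suspension is designed to achieve.
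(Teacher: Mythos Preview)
The paper does not contain its own proof of this statement; it is quoted there verbatim as \cite[Theorem~6]{BW} and used as a black box in the proof of Proposition~\ref{R1}. There is therefore nothing in the paper to compare your argument against.

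For what it is worth, your sketch follows the original Bowen--Walters strategy: in one direction one aligns the successive section crossings of the two $\delta$-close orbits (using the lower bound $h\ge\alpha$ to ensure the matching is bijective and order preserving) and reads off $dist_\Sigma(R^n x,R^n y)<\delta_R$; in the other direction one manufactures the reparametrization $\tau$ by sending the hitting times $a_n$ of $x$ to the hitting times $b_n$ of $y$ and interpolating affinely, then checks closeness in base and fiber separately. The caveats you list at the end---strict monotonicity and continuity of $\tau$, the bijective crossing correspondence for small $\delta$, and the uniform comparison of the Bowen--Walters metric on $M_h$ with $dist_\Sigma+|\cdot|$ near the identification $(x,h(x))\sim(R(x),0)$---are exactly the technical points that the original paper spends its effort on, and none of them hides a genuine obstruction.
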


\begin{theorem}\label{Pesin}(Pesin's entropy formula for flows) If $X\in\mathfrak{X}_{\mu}^{1+\alpha}(M)$ with $\alpha>0$, then $h_{\mu}(X)=\Lambda(X)$. 
 
\end{theorem}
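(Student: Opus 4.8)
The plan is to reduce the statement to the classical Pesin entropy formula for $C^{1+\alpha}$ diffeomorphisms preserving a smooth measure, applied to the time-one map $f=X^1$. By the definition of metric entropy of a flow adopted above we have $h_\mu(X)=h_\mu(X^1)$, so it suffices to compute the metric entropy of the diffeomorphism $f$ and to recognize the sum of its positive Lyapunov exponents as $\Lambda(X)$.

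First I would verify that $f=X^1$ lies within the scope of Pesin's theorem. Since $X\in\mathfrak{X}^{1+\alpha}_\mu(M)$, classical regularity theory for ordinary differential equations shows that the solution map $(t,x)\mapsto X^t(x)$ is $C^{1+\alpha}$ in the spatial variable; in particular $x\mapsto DX^1_x$ is $\alpha$-H\"older, so that $f\in\mathrm{Diff}^{1+\alpha}(M)$. Moreover $f$ preserves the Lebesgue measure $\mu$, which is a smooth volume. Hence Pesin's entropy formula applies to $f$ and yields
\[
h_\mu(X^1)=\int_M \sum_{\lambda_i(f,x)>0}\lambda_i(f,x)\,\dim E^i_x\,d\mu(x),
\]
where $\lambda_i(f,\cdot)$ are the Lyapunov exponents of $f$ and the sum runs over the positive ones counted with multiplicity.

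It remains to identify this integrand with $\lambda^+(X,x)$. As noted in the discussion preceding \eqref{det}, the exponents of $f=X^1$ obtained by integer-time iteration coincide with the flow exponents $\lim_{t\to\pm\infty}\frac1t\log\|DX^t_x\cdot v\|$. In the three-dimensional incompressible setting, at $\mu$-almost every point the Oseledets splitting is $E^0_x\oplus E^+_x\oplus E^-_x$ with exponents $0$, $\lambda^+(X,x)$ and $\lambda^-(X,x)=-\lambda^+(X,x)$, all of multiplicity one (with trivial spectrum on the set where $\lambda^+(X,x)=0$). Thus the unique positive exponent is $\lambda^+(X,x)$ with $\dim E^+_x=1$, and the integrand above reduces to $\lambda^+(X,x)$. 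Combining the two displays we obtain
\[
h_\mu(X)=h_\mu(X^1)=\int_M \lambda^+(X,x)\,d\mu(x)=\Lambda(X),
\]
which is the desired identity.

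The step requiring most care is the first one: one must confirm that the time-one map inherits the full $C^{1+\alpha}$ regularity of $X$, since it is precisely this regularity (rather than mere $C^1$) that upgrades the Margulis--Ruelle inequality into Pesin's equality. A secondary point is to ensure that the a.e.\ identification of the positive flow exponent with the positive exponent of $f$ is unaffected by the exceptional sets, namely $Sing(X)$ and, if present, the set of periodic orbits; here one uses that volume-preservation forces $\mathrm{tr}\,DX_x=0$ at singularities and that on the set of trivial Lyapunov spectrum both integrands vanish identically.
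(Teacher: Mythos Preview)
Your proof is correct and follows essentially the same route as the paper: apply the classical Pesin entropy formula to the $C^{1+\alpha}$ volume-preserving time-one map $X^1$, use the definition $h_\mu(X)=h_\mu(X^1)$, and identify the positive Lyapunov exponent of $X^1$ with $\lambda^+(X,\cdot)$. The paper's argument is terser, while you spell out the regularity of $X^1$ and the identification of exponents in more detail, but there is no substantive difference.
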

\begin{proof}
The proof is straightforward. Since $X^1$ is a $C^{1+\alpha}$ volume-preserving diffeomorphism we apply Pesin's entropy formula (see~\cite{P,M}) and obtain $h_{\mu}(X^1)=\int_M \lambda^{+}(X^1,x)d\mu(x)$. 
By the definition of entropy for the flow we have $h_{\mu}(X^1)=h_{\mu}(X)$. Finally, the result follows from the
fact that the upper Lyapunov exponent of the flow is equal to the one associated to the time-one map. 
\end{proof}

Using Margulis-Ruelle's inequality for discrete time systems \cite{R} we obtain analogously:

\begin{theorem}\label{MR} If $X\in\mathfrak{X}_{\mu}^{1}(M)$, then $h_{\mu}(X)\leq\int_M \lambda^{+}(X,x)\,d\mu(x)$. 
 
\end{theorem}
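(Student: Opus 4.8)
The plan is to reduce the statement to the classical Margulis--Ruelle inequality for the time-one diffeomorphism, exactly paralleling the proof of Theorem~\ref{Pesin} but weakening the regularity requirement from $C^{1+\alpha}$ to $C^1$. First I would invoke the definition of flow entropy together with Abramov's theorem, which gives $h_\mu(X)=h_\mu(X^1)$, so that the left-hand side is entirely encoded in the single volume-preserving diffeomorphism $f:=X^1$.

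Next I would apply Ruelle's inequality \cite{R} to $f$. Since $f$ is a $C^1$ diffeomorphism of the compact manifold $M$ preserving the probability measure $\mu$ (note that only $C^1$ regularity is required here, in contrast with the $C^{1+\alpha}$ hypothesis of Pesin's formula), Oseledets' theorem supplies at $\mu$-a.e.\ $x$ the exponents and splitting recalled above, and Ruelle's inequality reads
\begin{equation*}
h_\mu(f)\leq \int_M \sum_{i:\,\lambda_i(x)>0} \lambda_i(x)\,\dim(E^i_x)\,d\mu(x).
\end{equation*}

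The core point is then to identify the right-hand side. Using the three-dimensionality together with $|\det(DX^t_x)|=1$ from \eqref{det}, the discussion following \eqref{det} shows that at $\mu$-a.e.\ regular $x$ either all exponents vanish or the spectrum is $\{\lambda_1(x),0,-\lambda_1(x)\}$ with $\lambda_1(x)>0$, each of multiplicity one and with the zero exponent carried by the flow direction $E^0_x$. Hence $\sum_{i:\,\lambda_i(x)>0}\lambda_i(x)\dim(E^i_x)=\lambda^+(X^1,x)$, and because the Lyapunov exponents of the flow are defined through $f=X^1$ one has $\lambda^+(X^1,x)=\lambda^+(X,x)$. Combining these identities with the two previous displays yields $h_\mu(X)=h_\mu(f)\leq \int_M \lambda^+(X,x)\,d\mu(x)$, which is exactly $h_\mu(X)\le\Lambda(X)$.

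I do not expect a genuine obstacle, since the argument is a direct transcription of the discrete-time inequality (the authors themselves signal this by writing ``we obtain analogously''); the only points requiring care are verifying that Ruelle's inequality is indeed available in the purely $C^1$ category and that the sum over positive exponents collapses to the single upper exponent $\lambda^+$. Both follow immediately from the structural facts already established in the excerpt, namely the vanishing exponent along the flow direction and the relation $\lambda_1(x)+\lambda_3(x)=0$ forced by incompressibility in dimension three.
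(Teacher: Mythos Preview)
Your proposal is correct and is exactly the argument the paper has in mind: the authors give no explicit proof for Theorem~\ref{MR} beyond the phrase ``we obtain analogously,'' pointing back to the proof of Theorem~\ref{Pesin}, and your write-up simply carries out that analogy by replacing Pesin's formula with Ruelle's $C^1$ inequality applied to $f=X^1$. The only detail you spell out more fully than the paper is the collapse of the sum $\sum_{\lambda_i>0}\lambda_i\dim E^i$ to $\lambda^+$ via the three-dimensional incompressible structure, which is entirely in line with the paper's standing assumptions.
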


\end{subsection}

\end{section}

\begin{section}{Statements and proof of the results}

In the next proposition we cannot use directly ~\cite[Proposition 1.5]{T} because the time-one of an Anosov flow is not an Anosov diffeomorphism.  Actually, it is a partially hyperbolic diffeomorphism (see~\cite{BDV} for the definition).
Moreover, it is still unknown (Verjovski conjecture) whether every volume-preserving Anosov flow
is the suspension of an Anosov diffeomorphism. Nevertheless, we could prove the semicontinuity of the metric
entropy for the three-dimensional Anosov incompressible flows.

\begin{proposition}\label{R1}
The set of continuity points of $h_\mu$ when restricted to the Anosov incompressible flows is $C^1$-residual within three-dimensional Anosov incompressible flows. 
\end{proposition}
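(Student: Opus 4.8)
The plan is to show that $h_\mu$ is upper semicontinuous when restricted to the (open) set of three–dimensional Anosov incompressible flows, and then to conclude exactly as was done for $\Lambda$ in the previous subsection: a bounded upper semicontinuous function on the Baire space $\mathfrak{X}^1_\mu(M)$ is continuous on a residual subset. So the entire content is the semicontinuity statement, which is why the conclusion will not require Pesin's formula (that would force $C^{1+\alpha}$ regularity) but only expansiveness.

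First I would use the robustness of the Anosov property. Fix an Anosov $X_0$; since being Anosov is $C^1$-open there is a $C^1$-neighbourhood $\mathcal U$ of $X_0$ consisting of Anosov flows, and by compactness of $M$ and the cone-field description the hyperbolicity constants, and hence the local product structure, may be taken uniform over $\mathcal U$. This produces a \emph{common} Bowen--Walters expansivity constant: every $X\in\mathcal U$ is expansive (Remark \ref{Anosov}) with the same $\delta>0$ and $\epsilon_0>0$.

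The heart of the argument, and what I expect to be the main obstacle, is to produce a single finite measurable partition $\mathcal P$ of $M$ with $\mu(\partial\mathcal P)=0$ that is a Kolmogorov--Sinai generator for the time-one map $X^1$ of \emph{every} $X\in\mathcal U$ simultaneously. The difficulty is precisely the one flagged before the statement: the time-one map of an Anosov flow is not expansive—it is only partially hyperbolic with the neutral flow direction as one-dimensional centre—so no partition into small-diameter sets is generating for $X^1$ by diameter alone. Here I would use the expansiveness of the \emph{flow} rather than of $X^1$: take $\mathcal P$ whose boundaries consist of finitely many small cross-sections transverse to $X_0$ (hence uniformly transverse to all $X\in\mathcal U$) together with thin flow-boxes of diameter $<\delta$ and flow-height $<\epsilon_0$. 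Because the flow carries any two nearby points with a small time-offset across these fixed transverse sections at distinct moments, the refinement $\bigvee_{i\in\mathbb Z}(X^1)^{-i}\mathcal P$ separates orbits up to bounded time displacement, and the flow-expansiveness furnished by Theorem \ref{BW} upgrades this to a genuine generator; the uniform constants let one and the same $\mathcal P$ serve every $X\in\mathcal U$. The delicate point is exactly the verification that $\mathcal P$ generates uniformly over $\mathcal U$, and it is unavoidable because one cannot pass to a single global cross-section and import the discrete statement \cite[Proposition 1.5]{T} through Abramov's formula \eqref{entropy}: by the Verjovsky conjecture a volume-preserving Anosov flow need not be a suspension.

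With such a $\mathcal P$ the conclusion is immediate and mirrors the treatment of $\Lambda$. For every $X\in\mathcal U$ one has $h_\mu(X)=h_\mu(X^1)=h_\mu(X^1,\mathcal P)=\inf_{n\ge1}\tfrac1n H_\mu\bigl(\bigvee_{i=0}^{n-1}(X^1)^{-i}\mathcal P\bigr)$. For fixed $n$ the map $X\mapsto \tfrac1n H_\mu(\bigvee_{i=0}^{n-1}(X^1)^{-i}\mathcal P)$ is continuous on $\mathcal U$, since the atoms of the join have boundaries contained in $\bigcup_i X^{-i}(\partial\mathcal P)$, which are $\mu$-null because $X^{-i}$ preserves $\mu$ and $\mu(\partial\mathcal P)=0$, and a $C^1$-perturbation of $X$ moves these boundaries continuously while keeping them $\mu$-null, so the measures of the atoms—and hence $H_\mu$—vary continuously. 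Thus $h_\mu$ is an infimum of continuous functions on $\mathcal U$, therefore upper semicontinuous there, and bounded (by Theorem \ref{MR}, $h_\mu\le\Lambda<\infty$). Since upper semicontinuity is a local property this shows $h_\mu$ is upper semicontinuous on the whole open Anosov set, and, exactly as argued for $\Lambda$, a bounded upper semicontinuous function on this Baire space is continuous on a residual subset, which is the assertion.
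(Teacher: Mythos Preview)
Your scaffold---prove upper semicontinuity of $h_\mu$ on a $C^1$-neighbourhood by exhibiting it as an infimum of continuous functions, then invoke Baire---is exactly the paper's. The implementations diverge at the key step of producing a partition that computes the entropy for every nearby flow. You work directly with the time-one map $X^1$ on $M$; the paper instead passes to the first-return map on a fixed finite family of \emph{local} cross-sections $\Sigma_0$ coming from a Markov partition, observes via Theorem~\ref{BW} that this return map is uniformly expansive for all $C^1$-close flows, takes a common generator on $\Sigma_0$, and recovers $h_\mu(X^t)$ through Abramov's formula~\eqref{entropy}. Your explicit rejection of the section route is thus based on a misreading: no \emph{global} cross-section is required, so the Verjovsky obstruction is irrelevant---the Markov construction supplies finitely many local sections whose return dynamics is a subshift of finite type, and that is precisely what the paper exploits.

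The gap in your own route is the claim that the flow-box partition $\mathcal P$ is a \emph{genuine generator} for $X^1$. Theorem~\ref{BW} is about return maps to sections, not time-one maps, so invoking it here is illegitimate; and in fact $\mathcal P$ need not generate: on a closed orbit of integer period every point is $X^1$-periodic, and two such points lying in the interior of the same flow box are never separated by $\bigvee_i (X^1)^{-i}\mathcal P$. Even $\mu$-a.e.\ the assertion that integer iterates eventually straddle a fixed transverse section amounts to an equidistribution statement for section-crossing times modulo~$1$ that you have not established. The repair is to drop the generator claim in favour of the weaker (and sufficient) fact $h_\mu(X^1,\mathcal P)=h_\mu(X^1)$ for every $\mathcal P$ of small diameter, which holds because the time-one map of an expansive flow is entropy-expansive (Bowen), uniformly over $\mathcal U$; with that substitution your argument would go through. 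As written, the central step is unjustified, and the paper's detour through the expansive return map and Abramov is precisely the device that sidesteps the non-expansiveness of $X^1$.
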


\begin{proof}
It follows from \cite{Bo73,Rt} that any Anosov flow $X^t$  admits a finite Markov partition of arbitrary small size. 
Moreover, there exists a subshift of finite type $\sigma\colon\Sigma \to\Sigma$ and a ceiling function $h$ with
summable variation so that $X^t$ is semiconjugated to the symbolic suspension flow $S^t$ as above with $R=\sigma$ and $M=\Sigma$, and a bounded continuous roof function $h$ . 
More precisely,  following~\cite{Bo73}, there exists a finite-to-one continuous surjection $\phi\colon \Sigma_h \to M$ satisfying $\phi\circ S^t=X^t \circ \phi$ for every $t\in \mathbb R$. Indeed, $\phi$ is a bijection on the complement
of the set $\phi^{-1}(\cup_{t\in\mathbb R} \partial \mathcal{P})$, where $\mathcal{P}$ are the rectangles of the Markov partition in $\Sigma$. For completeness reasons let us point out that the construction of the Markov partitions use
a finite number of two-dimensional cross sections $\Sigma_0$ transverse to the vector field $X$, that can be taken uniform for every $C^1$-close vector field.   
Finally, since we deal with two-dimensional cross-sections, the boundaries of the Markov partition are formed
by a finite union of one dimensional smooth curves obtained as intersection of two-dimensional $W^{cs}$ and $W^{cu}$ manifolds with the sections $\Sigma_0$ which form a zero Lebesgue measure set.

In particular, $\mu_{\Sigma_h}=\phi^*\mu$ is a well defined $S^t$-invariant probability measure on $\Sigma_h$ and $\phi$ is a measure theoretical isomorphism between $(X^t,\mu)$ and $(S^t,\mu_{\Sigma_h})$. Hence,
using~\eqref{eq:identification} and \eqref{entropy}  one deduces that there exists a $\sigma$-invariant probability measure
$\mu_{\Sigma}$ such that
$$
h_\mu(X^t)=h_{\mu_{\Sigma_h}}(S^t) = \frac{h_{\mu_{\Sigma}}(\sigma)}{\int h \,d\mu_\Sigma}.
$$
Recall that every Anosov flow is expansive (see Remark~\ref{Anosov}) and the expansiveness constant varies
continuously within Anosov flows. Hence, there exists a uniform $\epsilon>0$ for which all $Y\in \mathfrak{X}^1_\mu(M)$ that are $C^1$-close to $X$ are $\epsilon$-expansive and, by Theorem~\ref{BW}, the base maps 
$\sigma_Y$ are also (uniformly) expansive. So, let $\mathcal P$ be a partition on $\Sigma_0$ that is generating
for all $\sigma_Y$. It follows from Kolmogorov-Sinai's theorem and sub-additivity that
$$
h_{{\mu}_\Sigma}(\sigma,\mathcal{P})=\underset{n\rightarrow +\infty}{\lim}\frac{1}{n}H_{\mu_\Sigma}(\mathcal P^{(n)})
= \inf_{n\ge 1}\frac{1}{n}H_{\mu_\Sigma}(\mathcal P^{(n)}),
$$
where $\mathcal{P}^{(n)}=\mathcal{P}\vee \sigma^{-1}(\mathcal{P})\vee ... \vee \sigma^{-n+1}(\mathcal{P})$ is the dynamically refined partition on $\Sigma$ and $H_{{\mu}_\Sigma}(\mathcal Q)=\sum_{Q\in \mathcal Q} -\mu_\Sigma(Q)\log \mu_\Sigma(Q)$ for every partition $\mathcal Q$ on $\Sigma$. 
Notice that the function 
$$
X \mapsto \frac{1}{n} H_{\mu^X_{\Sigma_h}} (\mathcal{P}^{(n)})
$$
is continuous. As a consequence, the function $X\mapsto \underset{n\in\mathbb{N}}{\inf}\frac{1}{n} H_{\tilde{\mu}_X}(\mathcal{P}_X^{(n)})$ is upper semi-continuous because it is the infimum of continuous functions. Hence, there exists a residual subset such that 
the previous function is continuous.
\end{proof}

\begin{proposition}\label{R2}
Let $\mathcal{R}$ be the residual given by Theorem~\ref{bessa} and let $\mathcal{Z}$ stands for the set $\mathcal{R}$ except the $C^1$ closure of the Anosov flows. Then, any $X\in \mathcal{Z}$ is a continuity point of the metric entropy function $h_{\mu}$.
\end{proposition}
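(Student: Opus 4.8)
The plan is to show that for every $X\in\mathcal{Z}$ the metric entropy vanishes, $h_\mu(X)=0$, and that $h_\mu$ is upper semicontinuous at such an $X$; since $h_\mu\ge 0$ everywhere, these two facts pin $h_\mu$ near $X$ between $0$ and an upper semicontinuous envelope tending to $0$, which forces continuity.

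First I would compute $h_\mu(X)$. Because $\mathcal{Z}$ is $\mathcal{R}$ with the $C^1$-closure of the Anosov flows removed, and any set is contained in its closure, every $X\in\mathcal{Z}$ fails to be Anosov while still lying in $\mathcal{R}$. Theorem~\ref{bessa} then gives $\lambda^{+}(X,x)=0$ for $\mu$-a.e. $x\in M$, whence $\Lambda(X)=\int_M\lambda^{+}(X,x)\,d\mu(x)=0$. Feeding this into the Margulis--Ruelle inequality (Theorem~\ref{MR}) yields $0\le h_\mu(X)\le\Lambda(X)=0$, so $h_\mu(X)=0$.

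Next I would establish continuity at $X$. For an arbitrary $Y\in\mathfrak{X}^1_\mu(M)$ the Margulis--Ruelle inequality gives $0\le h_\mu(Y)\le\Lambda(Y)$, and by~\eqref{inf} the function $\Lambda$ is an infimum of continuous functions, hence upper semicontinuous; therefore $\limsup_{Y\to X}\Lambda(Y)\le\Lambda(X)=0$. Combining these, $\limsup_{Y\to X}h_\mu(Y)\le 0$, while the trivial bound $h_\mu(Y)\ge 0=h_\mu(X)$ gives $\liminf_{Y\to X}h_\mu(Y)\ge 0$. Hence $\lim_{Y\to X}h_\mu(Y)=0=h_\mu(X)$, so $X$ is a continuity point of $h_\mu$.

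I do not expect a genuine analytic obstacle here: the entire argument is a sandwich of $h_\mu$ between the constant $0$ and the upper semicontinuous envelope $\Lambda$. The only step demanding attention is the first one, where the hypothesis that $X$ lies outside the $C^1$-closure of the Anosov flows is used precisely to guarantee that $X$ is non-Anosov, so that Theorem~\ref{bessa} applies and produces $\Lambda(X)=0$; everything else is just the combination of the Margulis--Ruelle inequality, the upper semicontinuity of $\Lambda$, and the non-negativity of the entropy.
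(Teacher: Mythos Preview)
Your argument is correct and is essentially the same as the paper's: both use that $X\in\mathcal{Z}$ is non-Anosov so $\Lambda(X)=0$ by Theorem~\ref{bessa}, then sandwich $h_\mu(Y)$ between $0$ and the upper semicontinuous function $\Lambda(Y)$ via the Margulis--Ruelle inequality to force $h_\mu(Y)\to 0=h_\mu(X)$. Your write-up is in fact a bit more explicit than the paper's, spelling out the $\limsup$/$\liminf$ sandwich and the reason $h_\mu(X)=0$.
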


\begin{proof}
Let be given $\epsilon>0$. Is is sufficient to prove that exists $\delta>0$, such that any divergence-free vector field $Y$ $\delta$-$C^1$-close to $X$ satisfies $h_{\mu}(Y)\leq \epsilon$. By Theorem~\ref{MR} we have $h_{\mu}(Y)\leq\int_M \lambda^{+}(Y,x)d\mu(x)=\Lambda(Y)$. Since $\Lambda$ is upper semicontinuous we have $\Lambda(Y)\leq \Lambda(X)+\epsilon$ for some $\epsilon>0$ small. But $\Lambda(X)=0$ because $X\in \mathcal{Z}$ and so $h_{\mu}(Y)\leq \epsilon$.
\end{proof}

Using Proposition~\ref{R1} and Proposition~\ref{R2} we obtain the following result.

\begin{corollary}\label{C3}
The set of continuity points of $h_\mu$ is a $C^1$-residual in $\mathfrak{X}_{\mu}^{1}(M)$. 
\end{corollary}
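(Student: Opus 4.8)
The plan is to obtain Corollary~\ref{C3} by showing that the set of continuity points of $h_\mu$ on all of $\mathfrak{X}_{\mu}^{1}(M)$ contains the intersection of a residual set with each of two complementary regions, and that this intersection is itself residual. Concretely, I would decompose $\mathfrak{X}_{\mu}^{1}(M)$ into the $C^1$-closure of the Anosov flows, call it $\overline{\mathcal{A}}$, and its complement. The strategy rests on the two propositions already proved: Proposition~\ref{R2} guarantees continuity of $h_\mu$ at every point of $\mathcal{Z}=\mathcal{R}\setminus\overline{\mathcal{A}}$, while Proposition~\ref{R1} yields a residual set of continuity points inside the Anosov region. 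The task is to glue these together into a single residual set in the whole space.

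First I would recall the elementary topological fact that a residual subset of an open set $U$ is residual in the ambient space relative to $U$, and that residuality is well-behaved under the decomposition $\mathfrak{X}_{\mu}^{1}(M)=\mathrm{int}(\overline{\mathcal{A}})\,\cup\,\big(\mathfrak{X}_{\mu}^{1}(M)\setminus \overline{\mathcal{A}}\big)\,\cup\,\partial\overline{\mathcal{A}}$, where the boundary $\partial\overline{\mathcal{A}}$ has empty interior. On the open set $\mathrm{int}(\overline{\mathcal{A}})$, every vector field is $C^1$-approximated by Anosov flows, and Proposition~\ref{R1} provides a residual subset $\mathcal{G}_1\subset\overline{\mathcal{A}}$ of continuity points of $h_\mu$ relative to the Anosov flows; since the Anosov flows are themselves open and dense in $\overline{\mathcal{A}}$, and entropy varies continuously there, $\mathcal{G}_1$ is residual in $\mathrm{int}(\overline{\mathcal{A}})$. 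On the open set $\mathfrak{X}_{\mu}^{1}(M)\setminus\overline{\mathcal{A}}$, Proposition~\ref{R2} shows that the residual set $\mathcal{Z}$ consists entirely of continuity points, so here the full residual set $\mathcal{R}$ restricted to this open set already works.

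The remaining step is to combine these. I would set $\mathcal{R}_{\mathrm{fin}}:=\mathcal{G}_1\cup\mathcal{Z}$ and argue that it is residual: a set that is residual in an open set $U_1$ together with a set residual in a disjoint open set $U_2$, where $U_1\cup U_2$ is dense, yields a residual union provided we only fail to control behavior on the nowhere-dense boundary, which we may safely discard by intersecting with a further residual $G_\delta$ if needed. Concretely, writing each of $\mathcal{G}_1$ and $\mathcal{Z}$ as a countable intersection of open dense subsets of its respective open region and extending these opens trivially to the full space, I would take the countable intersection of all of them; the resulting set is a $G_\delta$ and is dense because its closure contains both $\mathrm{int}(\overline{\mathcal{A}})$ and $\mathfrak{X}_{\mu}^{1}(M)\setminus\overline{\mathcal{A}}$, whose union is dense.

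The main obstacle I anticipate is the behavior on the boundary $\partial\overline{\mathcal{A}}$ of the Anosov region: a priori, a vector field lying on this boundary might be a discontinuity point of $h_\mu$ that neither proposition addresses directly. The clean way around this is precisely that $\partial\overline{\mathcal{A}}$ is closed with empty interior (hence nowhere dense), so that even if $h_\mu$ misbehaves there, those points form a meager set and do not obstruct residuality of the continuity set. I would therefore emphasize that the union of the two open regions is dense and that discarding the nowhere-dense boundary costs nothing in the Baire-category count, thereby concluding that the continuity points of $h_\mu$ form a $C^1$-residual subset of $\mathfrak{X}_{\mu}^{1}(M)$.
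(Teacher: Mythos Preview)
Your proposal is correct and follows precisely the approach the paper intends: the paper's own proof consists of the single sentence ``Using Proposition~\ref{R1} and Proposition~\ref{R2} we obtain the following result,'' and you have supplied exactly the Baire-category gluing that this sentence presupposes. One small point of precision: since the set $\mathcal{A}$ of Anosov flows is already $C^1$-open, continuity of $h_\mu|_{\mathcal{A}}$ at a point of $\mathcal{A}$ is equivalent to continuity of $h_\mu$ on the whole space at that point, so you can work directly with $U_1=\mathcal{A}$ and $U_2=\mathfrak{X}^1_\mu(M)\setminus\overline{\mathcal{A}}$ (two disjoint open sets whose union has nowhere-dense complement $\overline{\mathcal{A}}\setminus\mathcal{A}$) without passing through $\mathrm{int}(\overline{\mathcal{A}})$.
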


The following result is the continuous-time counterpart of Tahzibi's theorem~\cite{T}.

\begin{maintheorem}\label{teo1}
The Pesin entropy formula holds for a $C^1$-residual subset of $\mathfrak{X}_{\mu}^{1}(M)$.
\end{maintheorem}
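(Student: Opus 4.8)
The plan is to combine the two regimes already isolated by Proposition~\ref{R1} and Proposition~\ref{R2} and then to verify that Pesin's formula holds on a suitable residual subset obtained by intersecting the relevant residual sets. Concretely, I would work with the residual $\mathcal R$ from Theorem~\ref{bessa} and argue that for $X \in \mathcal R$ one has the dichotomy: either $X$ lies in the $C^1$-closure of the Anosov flows, or Lebesgue almost every point has only zero Lyapunov exponents. These two cases must be handled separately, and the goal is to show that on a residual subset the equality $h_\mu(X) = \Lambda(X)$ holds in both.

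First I would treat the non-Anosov case, which is the easier one. If $X$ avoids the $C^1$-closure of the Anosov flows, then by Theorem~\ref{bessa} we have $\lambda^+(X,x) = 0$ for Lebesgue-a.e.\ $x$, hence $\Lambda(X) = \int_M \lambda^+(X,x)\, d\mu(x) = 0$. By the Margulis--Ruelle inequality (Theorem~\ref{MR}) we then get $h_\mu(X) \le \Lambda(X) = 0$, and since entropy is nonnegative this forces $h_\mu(X) = 0 = \Lambda(X)$. So Pesin's formula holds trivially (with both sides vanishing) on this entire part of $\mathcal R$, no further genericity being needed there.

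For the Anosov case I would exploit the continuity statements already established. The set of Anosov incompressible flows is $C^1$-open, and on it both relevant quantities are residually continuous: $h_\mu$ is continuous on a residual subset by Proposition~\ref{R1}, while $\Lambda$ is continuous on a residual subset because it is upper semicontinuous (as noted after \eqref{inf}). The key observation is that by Theorem~\ref{Pesin} the identity $h_\mu(Y) = \Lambda(Y)$ holds for every $Y \in \mathfrak X^{1+\alpha}_\mu(M)$ with $\alpha > 0$, and such $C^{1+\alpha}$ vector fields are $C^1$-dense. Thus on the open set of Anosov flows the two continuous-on-a-residual functions $h_\mu$ and $\Lambda$ agree on a dense set; at any common point of continuity they must therefore agree. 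Intersecting the two residual sets of continuity points yields a residual subset of the Anosov flows on which $h_\mu = \Lambda$.

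Finally I would assemble the global residual set as the union of (the residual part of) the non-Anosov region together with the residual subset of continuity points inside the Anosov open region, and check that this union is residual in all of $\mathfrak X^1_\mu(M)$. The main technical obstacle is this last bookkeeping step: one must verify that gluing a residual subset of a closed (or complementary) region to a residual subset of an open region produces a set that is genuinely residual in the whole space, which requires care because residuality is not automatically preserved under such unions unless the regions partition the space appropriately up to the closure of the Anosov flows. I expect the delicate point to be controlling the boundary between the two regimes, i.e.\ the $C^1$-closure of the Anosov flows, and ensuring that the dichotomy from Theorem~\ref{bessa} together with Corollary~\ref{C3} covers this boundary without loss; once that is handled, Pesin's formula on the resulting residual set follows immediately from the two cases above.
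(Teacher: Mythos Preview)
Your approach uses the same essential ingredients as the paper---residual continuity of $h_\mu$ and of $\Lambda$, $C^1$-density of smoother divergence-free vector fields on which Pesin's formula holds exactly, and passage to the limit---but organizes them differently. The paper absorbs the Anosov/non-Anosov case split entirely into Corollary~\ref{C3}, obtaining a single residual set $\mathcal R\subset\mathfrak X^1_\mu(M)$ on which \emph{both} $h_\mu$ and $\Lambda$ are continuous; it then takes, for each $X\in\mathcal R$, a sequence $X_n\in\mathfrak X^2_\mu(M)$ with $X_n\to X$ in $C^1$, applies Theorem~\ref{Pesin} to each $X_n$, and lets $n\to\infty$. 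Because the continuity statement is already global, no gluing of residual subsets from the two regimes is required, which sidesteps precisely the bookkeeping obstacle you identify at the end.

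Your handling of the non-Anosov case is in fact a bit more direct than the paper's (Margulis--Ruelle together with $\Lambda(X)=0$ gives the formula outright, without any continuity argument), but that economy is offset by the need to reassemble a global residual set afterwards. One point you should make explicit: the $C^1$-density of $C^{1+\alpha}$ (or $C^2$) vector fields within $\mathfrak X^1_\mu(M)$ is not automatic in the divergence-free category---ordinary mollification does not preserve zero divergence---and the paper invokes Zuppa's regularization theorem~\cite{Z} for this step.
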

\begin{proof}
Recalling the paragraph after (\ref{inf}) and using Corollary~\ref{C3} we conclude that there exists a residual $\mathcal{R}\subset \mathfrak{X}^1_{\mu}(M)$ such that any $X\in \mathcal{R}$ is a continuity point of both $h_{\mu}$ and $\Lambda$. By Zuppa's theorem~\cite{Z} we know that $\mathfrak{X}^2_{\mu}(M)$ is $C^1$ dense in $\mathfrak{X}^1_{\mu}(M)$ and so we can take a sequence of $C^2$ divergence-free vector fields $\{X_n\}_{n\in\mathbb{N}}$ such that $X_n$ converge in the $C^1$ topology to $X$.

By Theorem~\ref{Pesin} since $X_n\in\mathfrak{X}_{\mu}^{2}(M)$, then $h_{\mu}(X_n)=\Lambda(X_n)$ for every $n\in\mathbb{N}$. Finally, given any $X\in\mathcal{R}$ we use the continuity of $h_{\mu}$ and $\Lambda$ at $X$ to conclude that $h_{\mu}(X)=\Lambda(X)$,  and the theorem is proved. 

\end{proof}

There are several results on the continuity of the metric and topological entropies over hyperbolic flows (see e.g.~\cite{Con} and the references therein). However, the next result shows that we cannot expect the continuity of the metric entropy in general.

\begin{maintheorem}\label{teo2}
Let $M$ be any compact manifold of dimension larger or equal to three. 
Then the functions $h_\mu\colon (\mathfrak{X}^1_{\mu}(M),C^1)\rightarrow \mathbb{R}$ and $\Lambda\colon 
(\mathfrak{X}^1_{\mu}(M),C^1)\rightarrow \mathbb{R}$ are not continuous.
\end{maintheorem}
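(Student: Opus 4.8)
The plan is to exhibit a single vector field at which \emph{both} $\Lambda$ and $h_\mu$ fail to be continuous; since the assertion only requires the existence of one point of discontinuity, this is enough. The mechanism is to start from a conservative flow whose integrated exponent is strictly positive but which carries no robust hyperbolicity, and then to collapse that exponent by a $C^1$-small perturbation via Theorem~\ref{teorema1}. Concretely, the goal is to produce $X\in\mathfrak{X}^2_\mu(M)$ that is aperiodic, satisfies $\Lambda(X)>0$, and for which every $\ell$-dominated invariant subset has zero volume; these are precisely the standing hypotheses of Theorem~\ref{teorema1}.

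Granting such an $X$, the conclusion is immediate. Applying Theorem~\ref{teorema1} with $\epsilon=\delta=1/n$ yields $Y_n\in\mathfrak{X}^1_\mu(M)$ with $Y_n\to X$ in the $C^1$ topology and $\Lambda(Y_n)<1/n$, so that $\Lambda(Y_n)\to 0$ while $\Lambda(X)>0$; hence $\Lambda$ is discontinuous at $X$. For the entropy, $X$ is of class $C^2\subset C^{1+\alpha}$, so Pesin's formula (Theorem~\ref{Pesin}) gives $h_\mu(X)=\Lambda(X)>0$, whereas the Margulis--Ruelle inequality (Theorem~\ref{MR}) gives $h_\mu(Y_n)\le\Lambda(Y_n)\to 0$. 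Therefore $h_\mu(Y_n)\to 0\neq h_\mu(X)$, and $h_\mu$ is discontinuous at $X$ as well.

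The heart of the proof, and the step I expect to be the main obstacle, is the construction of $X$: a conservative flow with positive Lyapunov exponents on a set of positive volume but with no dominated splitting is exactly the non-generic behaviour underlying the Bochi--Ma\~n\'e--Viana phenomenon, and it must be realised as a \emph{smooth} divergence-free field on the prescribed manifold. I would build it inside a tubular neighbourhood $D^{\,\dim M-1}\times S^1$ of an embedded circle, writing the cross-section as $D^2\times D^{\,\dim M-3}$ and taking the return map to be the product of an area-preserving surface map of $D^2$ having positive exponents on a positive-measure set and no dominated splitting with the identity on the neutral factor $D^{\,\dim M-3}$; the associated suspension is a divergence-free flow whose only nonzero exponents come from the $D^2$ factor. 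Outside the solid torus one fills in with an aperiodic neutral divergence-free field. The delicate points are to keep the \emph{global} field aperiodic and to verify that the neutral directions create no $\ell$-dominated invariant set of positive volume, the latter holding precisely because the surface map realises its exponents non-uniformly.

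It remains to account for the dimension, as Theorem~\ref{teorema1} is stated for $\dim M=3$. When $\dim M=3$ the neutral factor is absent and Theorem~\ref{teorema1} applies to $X$ verbatim, so the argument of the second paragraph is complete. When $\dim M>3$ the active sub-dynamics is the three-dimensional suspension supported on $D^2\times S^1$, and I would apply the (localised) perturbation underlying Theorem~\ref{teorema1} to this sub-dynamics, extending the perturbed field by the identity on the neutral directions so as to remain divergence-free; since the neutral and exterior regions contribute nothing to $\Lambda$ or to $h_\mu$, one again obtains $\Lambda(Y_n)\to 0$ and the discontinuity of both functions persists in every dimension $\ge 3$.
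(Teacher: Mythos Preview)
Your mechanism is exactly that of the paper: exhibit a smooth divergence-free $X$ with $h_\mu(X)=\Lambda(X)>0$ that is not Anosov, then approximate it in $C^1$ by fields whose integrated exponent (hence, by Margulis--Ruelle, whose entropy) is arbitrarily small. The difference lies in how the two ingredients are obtained. For the existence of $X$, the paper builds nothing: it invokes the Hu--Pesin--Talitskaya theorem \cite{HPT}, which furnishes on \emph{every} compact manifold of dimension $\geq 3$ a $C^\infty$ volume-preserving flow with a nonuniformly hyperbolic Bernoulli component; your tubular-neighbourhood suspension is essentially a hand-rolled instance of that very construction, so the whole third paragraph --- which you correctly flag as the hard part --- can be replaced by a single citation. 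For the approximating sequence, the paper uses the residual dichotomy of Theorem~\ref{bessa} (a generic non-Anosov field has $\Lambda=0$ on the nose) rather than Theorem~\ref{teorema1} directly; this is only a cosmetic difference, since the former is derived from the latter. Your explicit handling of $\dim M>3$ via a localised three-dimensional perturbation extended by the identity is plausible but would require reopening the proof of Theorem~\ref{teorema1}; the paper sidesteps this because the HPT flow already lives globally on $M$, so no reduction to a three-dimensional core is needed (though, strictly speaking, the paper's own appeal to Theorem~\ref{bessa} in higher dimension likewise leans on its known analogues beyond dimension three).
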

\begin{proof}
By Hu, Pesin and Talitskaya theorem (see~ \cite{HPT}) we know that every compact manifold carries a 
$C^\infty$ volume-preserving flow $X^t$ associated to a divergence-free vector field $X\colon M\rightarrow TM$ with 
a nonuniformly hyperbolic Bernoulli ergodic component $\mu$. Moreover, although $\text{Sing}(X)\neq \emptyset$ the singularities have zero Lebesgue measure and, by nonuniform hyperbolicity and Pesin's formula, 
$h_\mu(X)=\Lambda(X)>0$.
We claim that $X$ is a discontinuity point for both functions $h_{\mu}$ and $\Lambda$. Indeed, using that $X^t$ is
not Anosov it follows from Theorem~\ref{bessa} and the fact that $(\mathfrak{X}^1_{\mu}(M),C^1)$ is a Baire space 
that there exists a $C^1$-residual subset in $\mathcal R \subset \mathfrak{X}^1_{\mu}(M)$,  such that any $Y\in \mathcal{R}$ is such that $h_\mu(Y)\leq \Lambda(Y)=0$. This proves that  $X$ is not a continuity point for $h_\mu$ or $\Lambda$ and finishes the proof of the theorem.
\end{proof}

An interesting question is to characterize the continuity points of the metric entropy and Lyapunov exponent functions.  

\medskip

\noindent\textbf{Question 1:} Does the discontinuity points form a dense set? 

\end{section}

\begin{section}{Hamiltonians in symplectic $4$-manifolds}

It is worth mention that Theorem~\ref{teo1} has a counterpart for four-dimensional $C^2$-Hamiltonian systems.
In fact, the arguments used in the proof of the Theorem~\ref{teo1} lie the dichotomy of hyperbolicity versus almost 
everywhere zero Lyapunov exponents which was extended to the setting of four-dimensional $C^2$-Hamiltonian systems  in \cite{BD}. 

Let us recall some elementary facts about Hamiltonians. Let $M$ be a compact symplectic four-dimensional manifold endowed with the symplectic two-form $\omega$. We will be interested on \emph{Hamiltonians on $M$}, i.e., real functions on $M$ endowed with the $C^2$-topology. Given a Hamiltonian $H\colon M\rightarrow\mathbb{R}$, any scalar $e\in H(M)\subset\Rr$ is called an \emph{energy of $H$} and any connected component of 
$H^{-1}(e)=\{x\in M\colon H(x)=e\}$ the corresponding invariant energy level set. 
It is \emph{regular} if it does not contain critical points. For any $C^2$ Hamiltonian function $H\colon M\to{\mathbb{R}}$ there is a corresponding \emph{Hamiltonian vector field} $X_H\colon M\to TM$ determined by $\omega(X_H,\cdot)=DH(\cdot)$. Observe that $H$ is $C^2$ if and only if $X_H$ is $C^{1}$. The Hamiltonian vector field generates the \emph{Hamiltonian flow}, a smooth 1-parameter group of symplectomorphisms $\varphi^{t}_{H}$ on $M$. 
 
The volume form $\omega^2$ gives a measure $\mu$ on $M$ that is preserved by the Hamiltonian flow. On each regular energy surface $\EE\subset M$ (of dimension three) there is a natural finite invariant volume measure which we denote by $\mu_\EE$. We define hyperbolicity and also compute the Lyapunov exponents associated to the linear Poincar\'e flow of the Hamiltonian flow restricted to each $\EE$. Thus this \emph{transversal linear Poincar\'e flow} has dimension two and so let $\lambda^+(H,x)$ denotes the largest (or the nonnegative) Lyapunov exponent associated to the flow $\varphi^{t}_{H}$ (we refer to \cite[\S 2]{BD} for the full details on these structures).
Consider also $h_{\mu_{\mathcal E}}(X_H)$ denote the measure theoretical entropy of the flow $\varphi_H^t$ on the
level set $\mathcal E_p(H)$, and set 
$$
h_\mu(X_H)= \int h_{\mu_{\mathcal E}}(X_H) dH.
$$

Since the dynamics when restricted to each regular energy surface is a three-dimensional flow we can recover an analog of Proposition~\ref{R1} for four-dimensional $C^2$-Hamiltonians. It is worth to observe that the uniform hyperbolicity (Anosov property) holds in some connected component of the energy surface (cf.~\cite[Definition 2.4]{BFR}).

Now we recall the following useful result.

\begin{theorem}\label{teorema22}\cite[Theorem 1]{BD}
Let $(M,\omega)$ be a $4$-dimensional compact symplectic manifold. 
For a $C^2$-generic Hamiltonian $H\in C^2(M,\Rr)$, the union of the regular energy surfaces $\EE$ that are either Anosov or have zero Lyapunov exponents $\mu_\EE$-a.e. for the Hamiltonian flow, forms an open $\mu$-mod $0$ and dense subset of $M$.
\end{theorem}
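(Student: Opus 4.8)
The plan is to transplant the Bochi--Ma\~n\'e dichotomy, already exploited for divergence-free three-dimensional flows in Theorem~\ref{teorema1} and Theorem~\ref{bessa}, to the transversal linear Poincar\'e flow of the Hamiltonian. The first step is the reduction to dimension two: on each regular energy surface $\EE$ the flow $\varphi_H^t$ preserves the finite measure $\mu_\EE$, and the transversal linear Poincar\'e flow is a two-dimensional area-preserving (symplectic) linear cocycle. Consequently, after discarding the flow direction, the Oseledets spectrum over $\mu_\EE$-a.e.\ point is $\{\lambda^+(H,x),-\lambda^+(H,x)\}$, exactly the relation $\lambda_1+\lambda_3=0$ obtained in the incompressible three-dimensional setting from \eqref{det}. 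This symmetry is what makes the argument run parallel to the one already carried out on $\mathfrak{X}^1_\mu(M)$.

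Second, I would work with the integrated upper exponent on each level, $\int_\EE \lambda^+(H,x)\,d\mu_\EE$, and its total $\int_M \lambda^+(H,x)\,d\mu$. As in \eqref{inf}, the latter is an infimum of continuous functionals of $H$, hence upper semicontinuous in the $C^2$ topology, so its continuity points form a $C^2$-residual subset. The engine of the dichotomy, proved in \cite{BD} in analogy with Theorem~\ref{teorema1}, is the perturbative statement: if on a positive-$\mu_\EE$-measure subset of a level the transversal Poincar\'e flow admits no dominated splitting, then one may $C^2$-perturb $H$ so as to strictly decrease the integrated exponent. Coupled with upper semicontinuity, this forces at a continuity point, and for a.e.\ energy $e$, the following alternative on $\EE\subset H^{-1}(e)$: either $\lambda^+=0$ $\mu_\EE$-a.e., or the normal bundle splits dominatedly.

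Third, I would upgrade a dominated splitting with nonvanishing exponents to genuine uniform hyperbolicity of $\varphi_H^t|_\EE$. Since $\EE$ is a compact three-manifold on which the flow preserves $\mu_\EE$, the conservative three-dimensional theory applies: domination together with a.e.\ nonzero exponents yields that $\EE$ is an Anosov energy surface (cf.\ \cite{BFR}). This realizes the first branch of the statement. For the topological conclusions I would use that the Anosov property is $C^1$-open, so Anosov levels occur in open families inside $M$; the full-measure assertion then follows from the coarea disintegration of $\mu$ over the energies (the decomposition underlying $h_\mu(X_H)=\int h_{\mu_\EE}(X_H)\,dH$), since the alternative holds for a.e.\ energy. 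Finally, because $\mu$ has full support, a full-measure set that is open $\mu$-mod $0$ is automatically dense, which closes the argument.

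I expect the main obstacle to be the perturbation step that lowers the exponent in the absence of domination. It requires local perturbations of $H$ that rotate the Oseledets directions of the transversal Poincar\'e flow while simultaneously preserving the symplectic form and respecting the energy-level foliation, which is markedly more delicate than in the divergence-free case and is precisely where the $C^2$ regularity and the structure theory developed in \cite{BD} are needed.
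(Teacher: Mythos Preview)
The paper does not contain a proof of this statement: Theorem~\ref{teorema22} is quoted verbatim from \cite[Theorem~1]{BD} and is only \emph{recalled} as a tool for the Hamiltonian analogue of Theorem~\ref{teo1}. There is therefore nothing in the present paper to compare your proposal against.

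That said, your outline is a faithful sketch of the strategy actually carried out in \cite{BD}: reduce to the two-dimensional transversal linear Poincar\'e flow on each regular energy surface, exploit the symplectic symmetry $\lambda^++\lambda^-=0$, use upper semicontinuity of the integrated upper exponent together with a Bochi--Ma\~n\'e type perturbation lemma (absence of domination $\Rightarrow$ the exponent can be decreased by a $C^2$-small Hamiltonian perturbation), and then upgrade domination plus nonzero exponents to the Anosov property on the three-dimensional level set. You also correctly flag the genuine difficulty, namely producing perturbations that are Hamiltonian (hence symplectic and foliation-preserving) rather than merely divergence-free, which is the technical core of \cite{BD}. So your proposal is on target as a summary of the external reference, but it is not something the present paper proves or needs to prove.
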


Consider the product space $\mathcal M=M\times C^2(M,\mathbb R)$ and the set
$$
\mathcal A=\left\{ (p,H) \in\mathcal M :  \mathcal E_p(H) \text{ is an Anosov regular level}\right\},
$$
where $\mathcal E_p(H) \subset H^{-1}(H(p))$ denotes the level set in $M$ containing the point $p$. When no
confusion is possible we write $\mathcal E$ for simplicity.  By \cite[Theorem 2]{BFR} $\mathcal A$ is open in $M\times C^2(M,\mathbb R)$. The next proposition
is the Hamiltonian version of Proposition~\ref{R2} and its proof is analogous.
\begin{proposition}
The set of continuity points of the function 
$$
\begin{array}{ccc}
\mathcal A & \to & \mathbb R \\ 
(p,H)  & \mapsto & h_{\mu_{\mathcal E}}(X_H)
\end{array}
$$
contains a residual subset $\mathcal R_1 \subset \mathcal A$.
\end{proposition}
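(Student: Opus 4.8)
The plan is to adapt the proof of Proposition~\ref{R1} to the product space $\mathcal A\subset M\times C^2(M,\mathbb R)$. On $\mathcal A$ each level $\EE=\mathcal E_p(H)$ is a three-dimensional regular Anosov energy surface, and the restricted Hamiltonian flow $\varphi^t_H|_{\EE}$ is a three-dimensional incompressible Anosov flow. First I would fix a base pair $(p_0,H_0)\in\mathcal A$. Since $\mathcal A$ is open, for every $(p,H)$ in a sufficiently small neighborhood $\mathcal U$ of $(p_0,H_0)$ the level $\mathcal E_p(H)$ remains a regular Anosov energy surface; these surfaces are mutually diffeomorphic, and the diffeomorphisms may be chosen to depend continuously on $(p,H)$ (for instance by flowing along $\nabla H/\|\nabla H\|^2$ between nearby regular levels and invoking structural stability of the Anosov splitting). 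Under these identifications the finite invariant volumes $\mu_{\EE}$ also vary continuously.

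Next I would invoke the symbolic machinery used in Proposition~\ref{R1}. For each $(p,H)\in\mathcal U$ the Anosov flow $\varphi^t_H|_{\EE}$ admits a finite Markov partition of uniformly small diameter, built from finitely many two-dimensional cross-sections transverse to $X_H$ that can be taken uniform over $\mathcal U$. This yields a subshift of finite type $\sigma$ with a bounded continuous roof function $h$, a finite-to-one semiconjugacy onto $\varphi^t_H|_{\EE}$, and a $\sigma$-invariant probability measure $\mu_\Sigma$ obtained from $\mu_{\EE}$ through the correspondence~\eqref{eq:identification}, so that Abramov's formula~\eqref{entropy} gives
$$
h_{\mu_{\EE}}(X_H)=\frac{h_{\mu_\Sigma}(\sigma)}{\int h\,d\mu_\Sigma}.
$$

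Then I would deduce upper semicontinuity exactly as in Proposition~\ref{R1}. By Remark~\ref{Anosov} every Anosov flow is expansive and the expansiveness constant varies continuously, so, shrinking $\mathcal U$ if needed, there is a uniform expansiveness constant $\epsilon>0$ valid for all $(p,H)\in\mathcal U$; by Theorem~\ref{BW} the associated base maps $\sigma$ are then uniformly expansive. Fixing a partition $\mathcal P$ on the cross-sections that is generating for all such $\sigma$, the Kolmogorov--Sinai theorem together with sub-additivity yields
$$
h_{\mu_\Sigma}(\sigma)=\inf_{n\ge 1}\frac{1}{n}H_{\mu_\Sigma}\big(\mathcal P^{(n)}\big).
$$
Since each map $(p,H)\mapsto \tfrac{1}{n}H_{\mu_\Sigma}(\mathcal P^{(n)})$ is continuous, on $\mathcal U$ the function $(p,H)\mapsto h_{\mu_{\EE}}(X_H)$ is an infimum of continuous functions and hence upper semicontinuous. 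As upper semicontinuity is a local property, the function is upper semicontinuous on all of $\mathcal A$. Finally, $\mathcal A$ is open in the Baire space $M\times C^2(M,\mathbb R)$, hence itself a Baire space, and the set of continuity points of an upper semicontinuous real function on a Baire space is residual; this produces the desired residual subset $\mathcal R_1\subset\mathcal A$.

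The step I expect to be the main obstacle is the continuous dependence of the symbolic data on the two-parameter family $(p,H)$: one must check that the Markov partitions, the roof functions $h$, and especially the invariant measures $\mu_\Sigma$ depend continuously on $(p,H)$ when both the energy $H(p)$ and the Hamiltonian $H$ are perturbed, which requires identifying the distinct energy surfaces $\mathcal E_p(H)$ with a fixed model in a way compatible with the Anosov structure and the invariant volume. Once that continuity is secured, the passage from upper semicontinuity to a residual set of continuity points is routine.
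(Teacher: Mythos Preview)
Your proposal is correct and follows the approach the paper intends: the paper gives no detailed proof here, only the remark that this proposition is the Hamiltonian version of an earlier one with ``analogous'' proof, and you have supplied exactly those details by adapting the Markov-partition/symbolic-dynamics argument of Proposition~\ref{R1} to the two-parameter family $(p,H)$ of Anosov energy surfaces. Note that the paper's text cites Proposition~\ref{R2} at this point, but that is evidently a misprint for Proposition~\ref{R1}, since $\mathcal A$ is the Anosov locus and the preceding sentence already announces the analog of Proposition~\ref{R1}; your write-up matches that intended reference.
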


Now we proceed to prove semicontinuity of the measure theoretical entropy among $C^2$-Hamiltonians. 
The following results are collected from~\cite{BD}. Set $\mathcal B=\mathcal M\setminus \overline{\mathcal A}$ where $\overline{\mathcal{A}}$ stands for the $C^2$ closure of the set $\mathcal{A}$.
 Then there exists a continuous function $\rho: \mathcal B \to \mathbb R^+$ such that the connected component
$\mathcal V_{p,H}\subset M$ of $\{x \in M: |H(x)-H(p)|<\rho(p,H)\}$ satisfies the following: given $\epsilon,\delta>0$
and  $(p,H)\in \mathcal B$  there exists a Hamiltonian $\tilde{H}$ that is $C^2$-$\epsilon$-close to $H$ and 
\begin{equation}\label{l.approx}
\int_{\mathcal V_{p,\tilde{H}}} \lambda^+(\tilde{H},x) \, d\mu(x)<\delta.
\end{equation}
Hence, the set 
$$
\left\{ (p,H) :  \int_{\mathcal V_{p,H}} \lambda^+(H,x) \, d\mu(x)=0 \right\}
$$
contains a residual subset of $\mathcal B$. Hence, proceeding as in the proof of Proposition~\ref{R2} to
bound the metric entropy by the integrated Lyapunov exponent we obtain the following:

\begin{proposition}
There exists a residual $\mathcal R_2\subset \mathcal B$ so that 
$\mathcal R_2 \ni (p,H) \mapsto h_{\mu_{\mathcal E}}(X_H)$ is continuous.
\end{proposition}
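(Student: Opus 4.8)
The plan is to mirror the proof of Proposition~\ref{R2} exactly, transplanting it to the fibered Hamiltonian setting where the role of the ambient manifold is played by the energy surface $\mathcal{E}$ and the role of the ambient residual set is played by the displacement/perturbation statement \eqref{l.approx} collected from \cite{BD}. First I would unwind what has already been established in the paragraph preceding the statement: on $\mathcal{B}=\mathcal{M}\setminus\overline{\mathcal{A}}$ there is a continuous function $\rho$ and, for each $(p,H)\in\mathcal B$ and each $\epsilon,\delta>0$, a $C^2$-$\epsilon$-close Hamiltonian $\tilde H$ with $\int_{\mathcal V_{p,\tilde H}}\lambda^+(\tilde H,x)\,d\mu(x)<\delta$. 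This is precisely the Hamiltonian analog of Theorem~\ref{teorema1}, so by the same Baire-category argument already invoked there, the set of $(p,H)\in\mathcal B$ for which $\int_{\mathcal V_{p,H}}\lambda^+(H,x)\,d\mu(x)=0$ contains a residual subset $\mathcal R_2\subset\mathcal B$.

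Next I would establish continuity of $(p,H)\mapsto h_{\mu_{\mathcal E}}(X_H)$ at every point of this residual set. The mechanism is the same two-sided squeeze used in Proposition~\ref{R2}. Fix $(p,H)\in\mathcal R_2$ and $\epsilon>0$. On one side, the Margulis--Ruelle inequality (Theorem~\ref{MR}), applied to the three-dimensional flow $\varphi^t_{\tilde H}$ restricted to the regular energy surface $\mathcal E_{q}(\tilde H)$, gives
$$
h_{\mu_{\mathcal E}}(X_{\tilde H})\leq \int_{\mathcal E_q(\tilde H)} \lambda^+(\tilde H,x)\,d\mu_{\mathcal E}(x).
$$
On the other side, the integrated upper exponent is upper semicontinuous on each energy surface, by the same infimum-of-continuous-functions argument recorded after \eqref{inf} (the formula \eqref{inf} is purely about the linear Poincar\'e flow, which is exactly the transversal object $\lambda^+(H,x)$ refers to in the Hamiltonian case). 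Since $(p,H)\in\mathcal R_2$ forces the integrated exponent on the relevant neighborhood $\mathcal V_{p,H}$ to vanish, any $(q,\tilde H)$ sufficiently $C^2$-close has integrated exponent at most $\epsilon$, and therefore $h_{\mu_{\mathcal E}}(X_{\tilde H})\leq\epsilon$. As the entropy is nonnegative, this pins it near $0=h_{\mu_{\mathcal E}}(X_H)$, yielding continuity at $(p,H)$.

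The main obstacle I anticipate is bookkeeping rather than conceptual: one must be careful that the energy level $\mathcal E_q(\tilde H)$ containing a nearby point $q$ varies continuously with $(q,\tilde H)$ and stays regular, so that the restricted flow is genuinely a three-dimensional conservative flow to which Theorem~\ref{MR} applies and so that the measures $\mu_{\mathcal E}$ and the transversal exponents depend on the data in a jointly controlled way. This is exactly where the continuous function $\rho$ and the connected component $\mathcal V_{p,H}$ do their work: they provide a uniform tube of regular energy surfaces on which all estimates are simultaneously valid, letting the upper semicontinuity of the integrated exponent and the Margulis--Ruelle bound be combined uniformly. Granting these structures from \cite{BD}, the argument is identical in form to Proposition~\ref{R2}, and I would simply assert that the proof is analogous once \eqref{l.approx} and the residual set $\mathcal R_2$ are in hand.
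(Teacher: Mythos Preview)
Your proposal is correct and follows essentially the same approach as the paper: the paper itself gives no detailed proof here, merely saying ``proceeding as in the proof of Proposition~\ref{R2} to bound the metric entropy by the integrated Lyapunov exponent,'' and you have accurately unpacked what that means in the Hamiltonian setting, using the residual set from \eqref{l.approx}, the Margulis--Ruelle inequality on the three-dimensional energy surface, and the upper semicontinuity of the integrated exponent.
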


Moreover, following \cite[Proposition A.7]{BF} there exists a residual $\mathcal R \subset C^2(M,\mathbb R)$ and for every $H\in\mathcal R$ a residual subset $\mathcal R_H\subset M$ such
that 
$$
\mathcal R_1 \cup \mathcal R_2 = \bigcup_{H\in \mathcal R} \mathcal R_H\times \{H\}
$$
and, for every $H\in\mathcal R$ and $p\in \mathcal R_H$ either $\mathcal E_p(H)$ is Anosov or
$H$ has zero Lyapunov exponent in the following sense:
\begin{equation}
\int \int \lambda^+(H,x) \; d\mu_{\mathcal E}(x) dH=0.
\end{equation}
Observe that if $H\in C^3(M,\mathbb R)$ is Morse then $X_H$ is of class $C^2$ and it follows from Pesin's formula that 
$$
h_{\mu_{\mathcal E}}(X_H) = \int \lambda^+(H,x) \;d\mu_{\mathcal E}(x)
$$
for every regular level set $\mathcal E$ and we deduce
$$
h_{\mu}(X_H) = \int\int \lambda^+(H,x) \;d\mu_{\mathcal E}(x) \,dH.
$$

\begin{corollary}\label{C3H}
The set of continuity points of $h_\mu$ is a $C^2$-residual in the set of $C^2$ Hamiltonians in $M$. 
\end{corollary}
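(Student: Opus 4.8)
The plan is to assemble Corollary~\ref{C3H} from the three residual sets already constructed, exactly mirroring the way Corollary~\ref{C3} was obtained from Proposition~\ref{R1} and Proposition~\ref{R2} in the divergence-free setting. The target is the entropy function $H\mapsto h_\mu(X_H)$ defined on $C^2(M,\mathbb R)$ by integration over energy levels, and the goal is to exhibit a $C^2$-residual set of continuity points. First I would recall the decomposition already set up: we have the residual $\mathcal R_1\subset\mathcal A$ of continuity points over the Anosov region, the residual $\mathcal R_2\subset\mathcal B$ of continuity points over the complement of $\overline{\mathcal A}$, and the key fibering statement borrowed from \cite[Proposition A.7]{BF}, namely that $\mathcal R_1\cup\mathcal R_2=\bigcup_{H\in\mathcal R}\mathcal R_H\times\{H\}$ for a residual $\mathcal R\subset C^2(M,\mathbb R)$ with residual fibers $\mathcal R_H\subset M$.

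The core of the argument is pointwise on fibers followed by integration. For a fixed $H\in\mathcal R$ and $p\in\mathcal R_H$, the dichotomy in the displayed equation tells us that either $\mathcal E_p(H)$ is Anosov or the integrated upper Lyapunov exponent vanishes. In the first case continuity of $(p,H)\mapsto h_{\mu_{\mathcal E}}(X_H)$ comes from membership in $\mathcal R_1$ and the Hamiltonian analog of Proposition~\ref{R1}; in the second case it comes from membership in $\mathcal R_2$, where the Margulis–Ruelle bound $h_{\mu_{\mathcal E}}(X_H)\le\int\lambda^+(H,x)\,d\mu_{\mathcal E}(x)$ together with upper semicontinuity of the integrated exponent forces the entropy to be continuous (indeed to vanish along the approximation). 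Thus every $(p,H)\in\mathcal R_1\cup\mathcal R_2$ is a continuity point of the energy-level entropy function. To pass from the energy-level entropy to $h_\mu(X_H)=\int\int\lambda^+(H,x)\,d\mu_{\mathcal E}(x)\,dH$, I would invoke the Pesin formula identity already recorded for Morse $H\in C^3(M,\mathbb R)$ on each regular level, combine it with the dominated-type integrability furnished by the finiteness of $\mu$ and the uniform bounds on $\lambda^+$, and then integrate the fiberwise continuity over the energy parameter.

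The residuality bookkeeping is then the concluding step. Since $\mathcal R$ is residual in $C^2(M,\mathbb R)$ and for each $H\in\mathcal R$ the fiber $\mathcal R_H$ is residual in $M$, the fibered set $\bigcup_{H\in\mathcal R}\mathcal R_H\times\{H\}$ is residual in $\mathcal M=M\times C^2(M,\mathbb R)$, and its projection properties guarantee a $C^2$-residual set of Hamiltonians at which the full integrated entropy $h_\mu$ is continuous. I would make the final passage explicit by noting that, for $H\in\mathcal R$, continuity of $h_\mu(X_H)$ reduces to continuity of the inner energy-level entropy for $\mu$-almost every $p$ plus a dominated convergence argument in the $dH$ integration, which upgrades fiberwise continuity to continuity of the integral.

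The step I expect to be the main obstacle is the interchange between fiberwise continuity on the energy levels and continuity of the integrated quantity $h_\mu(X_H)=\int h_{\mu_{\mathcal E}}(X_H)\,dH$: the measures $\mu_{\mathcal E}$ and the very geometry of the level sets $\mathcal E$ vary with $H$, and one must ensure that the disintegration of $\mu$ over energy levels behaves continuously and that the integrand is dominated uniformly in a $C^2$-neighborhood. Controlling this $H$-dependence of the energy-surface foliation and justifying the dominated convergence in the energy parameter is where the technical weight lies; once it is in place the residuality combination and the appeal to Pesin's formula for Morse $C^3$ Hamiltonians are routine, exactly as in Corollary~\ref{C3}.
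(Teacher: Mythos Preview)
Your proposal is correct and follows the paper's own (implicit) route: the paper states Corollary~\ref{C3H} without proof, relying on exactly the ingredients you invoke --- the two Hamiltonian propositions giving residual continuity on $\mathcal A$ and on $\mathcal B$, the fibering $\mathcal R_1\cup\mathcal R_2=\bigcup_{H\in\mathcal R}\mathcal R_H\times\{H\}$ from \cite[Proposition A.7]{BF}, and the passage to the integrated entropy via the $C^3$-Morse density and Pesin's formula, all mirroring Corollary~\ref{C3}. Your identification of the dominated-convergence step in the energy-level integration as the technical crux is apt and is precisely the detail the paper leaves to the reader.
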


Recalling that $C^r$ Morse Hamiltonians ($r\geq 3$) are $C^2$-dense in our set of $C^2$-Hamiltonians we can obtain similarly the Hamiltonian counterpart of Theorem~\ref{teo1} above.

\begin{maintheorem}\label{teo3}
The Pesin entropy formula holds for a $C^2$-residual subset of the set of $C^2$ Hamiltonians in $M$. 
\end{maintheorem}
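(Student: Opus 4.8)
The plan is to mirror the proof of Theorem~\ref{teo1} exactly, replacing the role of Zuppa's $C^1$-density of $\mathfrak{X}^2_\mu(M)$ by the $C^2$-density of $C^r$ Morse Hamiltonians ($r\geq 3$) inside the space of $C^2$ Hamiltonians. First I would fix the $C^2$-residual set $\mathcal{R}\subset C^2(M,\mathbb{R})$ of continuity points of $h_\mu$ furnished by Corollary~\ref{C3H}; since $\Lambda_\mu(H):=\int\int \lambda^+(H,x)\,d\mu_{\mathcal E}(x)\,dH$ is, as in the three-dimensional case, the infimum of continuous functions (via the integrated version of~\eqref{inf} on each energy surface), it is upper semicontinuous and hence its continuity points also form a $C^2$-residual set. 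Intersecting the two residual sets yields a $C^2$-residual $\mathcal{R}'\subset C^2(M,\mathbb{R})$ at whose points both $h_\mu$ and $\Lambda_\mu$ are continuous.

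Next I would take an arbitrary $H\in\mathcal{R}'$ and, using that $C^r$ Morse Hamiltonians ($r\geq 3$) are $C^2$-dense, choose a sequence $\{H_n\}_{n\in\mathbb{N}}$ of such Morse Hamiltonians converging to $H$ in the $C^2$-topology. For each $n$, the field $X_{H_n}$ is of class $C^2$ (equivalently $H_n$ is $C^3$), so Pesin's entropy formula applies on every regular energy surface, giving $h_{\mu_{\mathcal E}}(X_{H_n})=\int \lambda^+(H_n,x)\,d\mu_{\mathcal E}(x)$; integrating over energies yields $h_\mu(X_{H_n})=\Lambda_\mu(H_n)$ for every $n$, exactly as recorded in the displayed equalities preceding Corollary~\ref{C3H}. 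Then I would pass to the limit: by continuity of $h_\mu$ at $H$ we get $h_\mu(X_{H_n})\to h_\mu(X_H)$, by continuity of $\Lambda_\mu$ at $H$ we get $\Lambda_\mu(H_n)\to\Lambda_\mu(H)$, and since the two sequences coincide term by term their limits agree, i.e. $h_\mu(X_H)=\Lambda_\mu(H)$. As $H\in\mathcal{R}'$ was arbitrary, Pesin's formula holds on the $C^2$-residual set $\mathcal{R}'$.

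The main obstacle, and the point requiring the most care, is not the abstract limiting argument but justifying that the fibered quantities behave well under integration over the energy parameter. Concretely one must ensure that the decomposition $\mathcal{R}_1\cup\mathcal{R}_2=\bigcup_{H\in\mathcal R}\mathcal{R}_H\times\{H\}$ from~\cite[Proposition A.7]{BF} genuinely produces continuity of the \emph{total} entropy $h_\mu(X_H)=\int h_{\mu_{\mathcal E}}(X_H)\,dH$ and not merely fiberwise continuity on each energy surface; this is where one needs the openness of $\mathcal{A}$ in $M\times C^2(M,\mathbb{R})$ together with the fact that the Anosov and zero-exponent regions exhaust $M$ modulo a $\mu$-null set (Theorem~\ref{teorema22}), so that a dominated-convergence type argument transfers fiberwise control to the integrated function. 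Once this measurability-and-integration bookkeeping is in place, Pesin's formula on each Morse member and the joint continuity close the argument verbatim as in Theorem~\ref{teo1}.
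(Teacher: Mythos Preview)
Your proposal is correct and follows essentially the same approach the paper indicates: the paper does not write out an explicit proof for Theorem~\ref{teo3} but merely remarks that, using the $C^2$-density of $C^r$ Morse Hamiltonians ($r\geq 3$) in place of Zuppa's theorem, the argument of Theorem~\ref{teo1} carries over verbatim. Your last paragraph on the fibered integration bookkeeping is a fair acknowledgment of a point the paper treats only implicitly via Corollary~\ref{C3H} and the surrounding discussion.
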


An interesting question is to know if the measure theoretical entropy and the integrated Lyapunov
exponent functions are continuous for Hamiltonians as in Theorem~\ref{teo2}. This would be true if the following
question has a positive answer.

\medskip
\noindent\textbf{Question 2:} In any symplectic manifold $M$ of dimension $2n+2$ ($n\in\mathbb{N}$) is there any Hamiltonian $H\colon M\rightarrow\mathbb{R}$ such that $H$ has $n$ positive Lyapunov exponents?

\end{section}

\section*{Acknowledgements}

We would like to thank Ali Tahzibi for his suggestions on these issues. M.B. was partially supported by FCT - Funda\c{c}\~ao para a Ci\^encia e a Tecnologia  through CMUP (SFRH/BPD/20890/2004). P.V was partially supported
by CNPq - Brazil.


\end{document}